\documentclass[a4paper, 12pt]{amsart}
\usepackage[letterpaper, margin=.99in]{geometry}
\usepackage{latexsym}
\usepackage{amssymb}
\usepackage{amsmath}
\usepackage{amsfonts}
\usepackage{amsthm}


            \DeclareFontFamily{U}{wncy}{}
            \DeclareFontShape{U}{wncy}{m}{n}{%
               <5>wncyr5%
               <6>wncyr6%
               <7>wncyr7%
               <8>wncyr8%
               <9>wncyr9%
               <10>wncyr10%
               <11>wncyr10%
               <12>wncyr6%
               <14>wncyr7%
               <17>wncyr8%
               <20>wncyr10%
               <25>wncyr10}{}

\newtheorem{thm}{Theorem}[section]

\newtheorem{lem}[thm]{Lemma}

\newtheorem{cor}[thm]{Corollary}
\newtheorem{prop}[thm]{Proposition}
\theoremstyle{definition}
\newtheorem*{dfn}{Definition}
\newtheorem{remark}{Remark}

\newtheorem{example}{Example}

\newcommand{\N}{\mathbb N}
\newcommand{\Z}{\mathbb Z}
\newcommand{\Q}{{\mathbb Q}}
\newcommand{\C}{\mathbb C}

\def\al{\alpha}

\begin{document}

\raggedbottom

\title[$\mbox{PSL}(2,\Z)$ Acting on Imaginary Fields]{On the number of orbits arising from the action of $\mbox{PSL}(2,\Z)$ on imaginary quadratic number fields}

\author{Muhammad Aslam}
\address[M. Aslam]{Department of Mathematics, King Khalid University,
P.O. Box 9004, Abha, Saudi Arabia} \email{draslamqau@yahoo.com}

\author{Abdulaziz Deajim}
\address[A. Deajim]{Department of Mathematics, King Khalid University,
P.O. Box 9004, Abha, Saudi Arabia} \email{deajim@kku.edu.sa, deajim@gmail.com}

\keywords{imaginary quadratic field, modular group, orbit}
\subjclass[2010]{05A18, 05E18, 11R11, 11A25, 20F05}
\date{\today}

\begin {abstract}
For square-free positive integers $n$, we study the action of the modular group $\mbox{PSL}(2,\Z)$ on the subsets $\{\,\frac{a+\sqrt{-n}}{c}\in \Q(\sqrt{-n})\, | \, a,b=\frac{a^2+n}{c},c \in \Z \,\}$ of the imaginary quadratic number fields $\Q(\sqrt{-n})$. In particular, we compute the number of orbits under this action for all such $n$ as provide an interesting congruence property of this number. An illustrative example and a C$^{++}$ code to calculate such a number for all $1\leq n \leq 100$ are also given.
\end {abstract}
\maketitle

\section{{\bf Introduction}}\label{intro}

Throughout this paper, we denote by $G$ the modular group $\mbox{PSL}(2, \Z)$, whose elements are all the M\"{o}bius transformations $z\mapsto (az+b)/(cz+d)$, $a,b,c,d\in\Z$, $ad-bc=1$. It is known (\cite{HM}) that $G$ has the finite presentation $<x,y : x^2=y^3=1>$, where $x$ and $y$ are, respectively, the transformations $z\mapsto -1/z$ and $z \mapsto (z-1)/z$. The modular group belongs to a more general family of groups called Hecke groups. A Hecke group $H_n$, $3\leq n \in \N$, is the group generated by the two M\"{o}bius transformations $z\mapsto -1/z$ and $z\mapsto z+\lambda_n$, where $\lambda_n=2\cos(\pi/n)$. It can be shown that $G \cong H_3$. Actions of the modular group, and Hecke groups in general, on many discrete and non-discrete structures have played significant roles in different branches of mathematics (see \cite{CD} for example).

Among the important discrete structures upon which the modular group acts are quadratic number fields. For a {\it real} quadratic number field $L=\Q(\sqrt{m})$, Q. Mushtaq (in \cite{Mush1}) studied the action of $G$ on the following subset of $L$:
$$\{\,\frac{a+\sqrt{m}}{c}\in L \; | \; a,\frac{a^2-m}{c},c \in \Z \,\}.$$
Subsequent works by several authors considered properties emerging from this action (see for instance \cite{MR}, \cite{MZ}, and \cite{Mush2}).

We shift the emphasis in this work towards studying the action of the modular group on {\it imaginary} quadratic number fields. Throughout this paper, $n$ denotes a square-free positive integer. Let $K_{-n}$ be the imaginary quadratic number field $\Q(\sqrt{-n})$, and consider the following subset of $K_{-n}$:
$$M_{-n}:=\{\,\frac{a+\sqrt{-n}}{c}\in K_{-n}\; | \; a,b=\frac{a^2+n}{c},c \in \Z \,\}. $$
It can be checked that $M_{-n}$ is the collection of the complex roots of all quadratic polynomials of the form $cx^2-2ax+b$ of the fixed discriminant $-4n$, with $a,b,c\in \Z$ and $0\leq a^2 < bc$.



It is not hard to see that there is a natural action of $G$ on $K_{-n}$ (inherited from the action of $G$ on $\mathbb{C}$). M. Ashiq and Q. Mushtaq in \cite{Ash-Mush} studied the action of a certain {\it subgroup} of $G$ on $M_{-n}$.
The aim of this paper is to study the action of $G$ itself on $M_{-n}$ and, in particular, count the number of orbits in $M_{-n}$ emerging from this action and present an interesting congruence property of this number (Theorems \ref{orbits 1} and \ref{orbits 2}).

\section{{\bf The action of $G$ on $M_{-n}$}}\label{properties}

For $\alpha=\cfrac{a+\sqrt{-n}}{c} \in M_{-n}$, we use the notation $a_\alpha:=a$, $b_\alpha:=b$, and $c_\alpha:=c$, and we call the ordered triple $(a_\al, b_\al, c_\al)$ {\it the signature of $\al$}.

\begin{prop}
$M_{-n}$ is a $G$-set.
\end{prop}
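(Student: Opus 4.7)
The plan is to verify that $M_{-n}$ is closed under the $G$-action. Since $G$ is generated by the two M\"{o}bius transformations $x:z\mapsto -1/z$ and $y:z\mapsto (z-1)/z$, and since a subset closed under a generating set is closed under the whole group, it suffices to show that $x(\alpha)\in M_{-n}$ and $y(\alpha)\in M_{-n}$ for every $\alpha\in M_{-n}$. Pick an arbitrary $\alpha=(a+\sqrt{-n})/c\in M_{-n}$ with signature $(a,b,c)$, noting that since $bc=a^2+n\geq n>0$, neither $b$ nor $c$ vanishes.

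First I would compute $x(\alpha)$ by rationalising the denominator: $-1/\alpha=-c/(a+\sqrt{-n})=-c(a-\sqrt{-n})/(a^2+n)=(-a+\sqrt{-n})/b$, using $a^2+n=bc$. Thus $x(\alpha)=\frac{(-a)+\sqrt{-n}}{b}$, and its candidate signature is $(-a,\,b',\,b)$ with $b'=((-a)^2+n)/b=(a^2+n)/b=c\in\Z$. So $x(\alpha)\in M_{-n}$ with signature $(-a,c,b)$.

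Next I would compute $y(\alpha)=1-1/\alpha$ by the same rationalisation: $1/\alpha=(a-\sqrt{-n})/b$, hence $y(\alpha)=\frac{(b-a)+\sqrt{-n}}{b}$. Its candidate signature is $(b-a,\,b'',\,b)$ where $b''=((b-a)^2+n)/b$. Here the only thing to check is that $b''\in\Z$; expanding and using $a^2+n=bc$ yields $(b-a)^2+n=b^2-2ab+(a^2+n)=b^2-2ab+bc=b(b-2a+c)$, so $b''=b-2a+c\in\Z$. Thus $y(\alpha)\in M_{-n}$ with signature $(b-a,\,b-2a+c,\,b)$.

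There is no real obstacle in this proof — the content is entirely in the two short algebraic manipulations above. The only subtlety is confirming that the new denominator $b$ is nonzero (which follows from $bc=a^2+n\geq n>0$) and that the integrality of $(a^2+n)/c$ transfers to the integrality of $((b-a)^2+n)/b$, which is exactly what the identity $(b-a)^2+n=b(b-2a+c)$ delivers. Having closed $M_{-n}$ under the generators, closure under all of $G$ is automatic.
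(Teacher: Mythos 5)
Your proof is correct and follows essentially the same route as the paper's: reduce to the generators $x$ and $y$, rationalise the denominator, and verify that the resulting signatures $(-a,c,b)$ and $(b-a,\,b-2a+c,\,b)$ have integer entries. The extra observation that the new denominator $b$ is nonzero is a harmless (and welcome) addition.
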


\begin{proof}
As $G$ acts on $K_{-n}$, it remains only to show that $M_{-n}$ is invariant under this action. Let $\alpha=\cfrac{a+\sqrt{-n}}{c} \in M_{-n}$. To show that $t(\alpha) \in M_{-n}$ for every $t \in G$, it suffices to show that $x(\alpha), y(\alpha) \in M_{-n}$ since $\{x, y\}$ is a complete set of generators of $G$. We see, first, that
$$x(\alpha)=-1/\alpha = \frac{-c}{a+\sqrt{-n}}=\frac{-c(a-\sqrt{-n})}{a^2+n}=\frac{-a+\sqrt{-n}}{b}.$$ Noticing that $a_{x(\alpha)}=-a \in \Z$, $c_{x(\alpha)}=b \in \Z$, and $b_{x(\alpha)}=\cfrac{a_{x(\alpha)}^2+n}{c_{x(\alpha)}}=\cfrac{a^2 +n}{b}=c\in\Z$, we get that $x(\alpha)\in M_{-n}$.
Similarly, we see that
$$y(\alpha)=1-\frac{1}{\al}=1+x(\al)=1+\frac{-a+\sqrt{-n}}{b}=\frac{(-a+b)+\sqrt{-n}}{b}.$$
As $a_{y(\alpha)}=-a+b \in \Z$, $c_{y(\alpha)}=b \in \Z$, and $$b_{y(\alpha)}=\frac{a_{y(\alpha)}^2+n}{c_{y(\alpha)}}=\frac{(-a+b)^2+n}{b}=\frac{a^2-2ab+b^2+n}{b}=-2a+b+\frac{a^2+n}{b}=-2a+b+c \in\Z,$$
we get that $y(\alpha)\in M_{-n}$ as well.
\end{proof}

\begin{remark}\label{remark 1}
For some use in the sequel, the following table summarizes the action of each $t\in \{x, y, y^2\}$ on an arbitrary element $\alpha=\cfrac{a+\sqrt{-n}}{c} \in M_{-n}$. The first two lines of the table were verified in the above proof, while the third line can be checked in a similar manner.\\

\begin{center}
\begin{tabular}{c| c c c c c}
\hline
 $t(\al)$ & $a_{t(\al)}$ & \quad & $b_{t(\al)}$ & \quad & $c_{t(\al)}$\\
\hline
$x(\al)$ & $-a$ & $\quad$ & $c$ & $\quad$ & $b$\\
$y(\al)$ & $b-a$ & $\quad$ & $-2a+b+c$ & $\quad$ & $b$\\
$y^2(\al)$ & $c-a$ & $\quad$ & $c$ & $\quad$ & $-2a+b+c$\\
\hline

\end{tabular}
\end{center}
\begin{center} \tablename{$\;$1}: Signatures of $x(\al), y(\al),$ and $y^2(\al)$ \end{center}
\end{remark}

We recall and introduce here some needed terminology.

\begin{dfn}(see \cite{Aslam})\hfill
\begin{enumerate}
\item[1.] An element $\alpha \in M_{-n}$ is said to be {\it totally positive} (resp. {\it totally negative}) if $a_\alpha c_\alpha >0$ (resp. $a_\alpha c_\alpha <0$).
\item[2.] Define the map $\| . \|:M_{-n} \to \N\cup\{0\}$ by $\|\al\|=|a_\al|$. We call $\|\al\|$ {\it the norm of $\al$} (not to be confused with the classical notion of norm).
\end{enumerate}
\end{dfn}

\begin{dfn} For $\al\in M_{-n}$, we call the set $\{\al, y(\al), y^2(\al)\}$ a {\it totally positive triple in $M_{-n}$} if $\al, y(\al),$ and $y^2(\al)$ are all totally positive. Denote the set of totally positive triples in $M_{-n}$ by $T^+(-n)$.
\end{dfn}

\begin{remark}\label{remark 2} For $\cfrac{a+\sqrt{-n}}{c} \in M_{-n}$ and $b=\cfrac{a^2+n}{c}$, $bc$ is obviously always positive. Thus, $b$ and $c$ always have the same sign. So, an equivalent useful definition to the one given above can go like this: $\alpha \in M_{-n}$ is said to be totally positive if either $a_\alpha , b_\alpha, c_\alpha >0$ or $a_\al, b_\al, c_\al <0$; and $\alpha$ is said to be totally negative if either ($a_\al<0$ and $b_\al, c_\al >0$) or ($a_\al >0$ and $b_\al, c_\al <0$). Note that any $\al \in M_{-n}$ is either totally positive, totally negative, or has norm zero.
\end{remark}

\begin{example}
For $n=5$, $\al=(1+\sqrt{-5})/2\in M_{-5}$ is obviously totally positive. From Table 1, we have $y(\al)=(2+\sqrt{-5})/3$ and $y^2(\al)=(1+\sqrt{-5})/3$. It is clear that $y(\al)$ and $y^2(\al)$ are both totally positive as well. So, $\{\al, y(\al), y^2(\al)\}\in T^+(-5)$. Note, similarly, that $\al'=(-1+\sqrt{-5})/(-2)$, $y(\al')=(-2+\sqrt{-5})/(-3)$, and $y^2(\al')=(-1+\sqrt{-5})/(-3)$ are all totally positive and, thus, $\{\al', y(\al'), y^2(\al')\}\in T^+(-5)$.
\end{example}

As a matter of notation, for $\al\in M_{-n}$, we denote the orbit containing $\al$ in $M_{-n}$ under the action of $G$ by $\al^G$; that is $\al^G=\{\beta\in M_{-n}\;|\; \beta=t(\al) \; \mbox{for some $t\in G$}\}$. As the action of $G$ on every orbit is transitive, any element of the orbit can equally represent the orbit. This justifies the notation $\al^G$ for an orbit in $M_{-n}$ under the action of $G$. Denote the set of orbits in $M_{-n}$ under the action of $G$ by $\mathcal{O}^G(M_{-n})$; so $\mathcal{O}^G(M_{-n}):=\{\al^G\;|\; \al\in M_{-n}\}$. We adopt the standard notation $d(n)$ for the number of positive divisors of $n$.

Now we state below our two main results, which give formulas that count the number of orbits $\mathcal{O}^G(M_{-n})$ as well as an interesting congruence property of such a number.

\begin{thm} \label{orbits 1}
Let $n$ be a square-free positive integer. Then the number of orbits in $M_{-n}$ under the action of $G$ is:
\begin{align*}
|\mathcal{O}^G(M_{-n})|&= \left \{ \begin{array} {c@{\quad;\quad}l} 2 & \mbox{if $n=1$} \\
d(n)+|T^+(-n)| & \mbox{otherwise}.
\end{array} \right. \\
 &= \left \{ \begin{array} {c@{\quad;\quad}l} 2 & \mbox{if $n=1$}\\
 4 & \mbox{if $n=3$} \\
d(n)+(2/3)|A^+(-n)| & \mbox{otherwise,}
\end{array}\right.
\end{align*}
where $A^+(-n)$ denotes the set of signatures $$\{(a,b,c)\in \N^3\;|\; \cfrac{a+\sqrt{-n}}{c}\in M_{-n},\; b=\cfrac{a^2+n}{c}, \; b>a, \; c>a\}.$$ Moreover, $|\mathcal{O}^G(M_{-n})| \equiv 0\;(\mbox{mod}\,4)$ for $n\neq 1$ or $2$.
\end{thm}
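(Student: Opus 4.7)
The plan is to prove the three claims sequentially. For the main formula $|\mathcal{O}^G(M_{-n})| = d(n) + |T^+(-n)|$ (with $n \neq 1$), I would show that every $G$-orbit in $M_{-n}$ contains a canonical representative of exactly one of two types: either a norm-zero element, or a totally positive triple. Starting from any $\alpha$ of positive norm that is not part of a totally positive triple, the sign analysis in Remark \ref{remark 2} together with Table 1 produces an element of $\{x(\alpha), y(\alpha), y^2(\alpha)\}$ of strictly smaller norm, so iterating this reduction terminates at a canonical minimum. The norm-zero elements of $M_{-n}$ are precisely the $\sqrt{-n}/c$ with $c$ a signed integer divisor of $n$, on which $x$ acts by $c \mapsto n/c$; since $n$ is square-free (so $c \neq n/c$ when $n > 1$) and $y$ takes us out of the norm-zero set, the orbits restricted to norm-zero elements are the $d(n)$ unordered pairs $\{c, n/c\}$, split between positive and negative divisors. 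Orbits with no norm-zero element contain a unique totally positive triple (existence by the reduction, uniqueness from minimality of norm), yielding the bijection with $T^+(-n)$.

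For the second form, Remark \ref{remark 2} shows every totally positive triple has either all-positive or all-negative signatures, and the involution $(a,b,c) \mapsto (-a,-b,-c)$ (corresponding to complex conjugation) commutes with $y$ and freely swaps the two kinds, so $|T^+(-n)|$ is twice the number of all-positive triples. Direct checks from Table 1 show that $\alpha$ lies in an all-positive triple iff $\alpha \in A^+(-n)$, and that $A^+(-n)$ is closed under $y$, so it partitions into $\langle y \rangle$-orbits; for $n \neq 3$ each orbit has size $3$, giving $|T^+(-n)| = (2/3)|A^+(-n)|$. The only $y$-fixed point in any $A^+(-n)$ is $(1,2,2) \in A^+(-3)$ (since $y$-fixed forces $b = 2a$ and $b^2 = a^2 + n$, hence $n = 3a^2$ with $a = 1$), which explains the separate $n = 3$ case and is handled by direct computation.

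For the congruence $|\mathcal{O}^G(M_{-n})| \equiv 0 \pmod 4$, write $k$ for the number of all-positive triples, so $|\mathcal{O}^G(M_{-n})| = d(n) + 2k$ and it suffices to show $2^{r-1} + k$ is even, where $r$ is the number of prime divisors of $n$. My main tool is the involution $\tau : A^+(-n) \to A^+(-n)$, $(a,b,c) \mapsto (a,c,b)$; a quick computation from Table 1 yields $\tau y = y^{-1} \tau$, so $\tau$ descends to an involution on all-positive triples and $k \equiv (\text{number of $\tau$-fixed triples}) \pmod 2$. A triple is $\tau$-fixed iff it contains an element $(a,b,b)$ with $b^2 - a^2 = n$, i.e., a factorization $n = (b-a)(b+a)$ with $b > a > 0$ and $b \pm a$ of common parity. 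Square-freeness then gives exactly $d(n)/2$ such factorizations when $n$ is odd and none when $n$ is even (in the even case both factors even would force $4 \mid n$). Combining with $d(n) = 2^r$ yields divisibility by $4$ in all required cases.

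The main obstacle is the first stage, specifically proving the termination of the norm reduction and the uniqueness of canonical representatives within each orbit; the congruence step is then a tidy application of the symmetry $\tau$.
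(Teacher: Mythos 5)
Your proof of the orbit-count formulas follows essentially the same route as the paper: classify each orbit by whether it terminates (under a norm descent) at a pair of norm-zero elements or at a totally positive triple (the paper's Lemma \ref{norm}), count the former using the $2d(n)$ elements $\sqrt{-n}/c$ paired by $x$ via $c\mapsto n/c$ (Lemma \ref{divisors}), and pass from $|T^+(-n)|$ to $(2/3)|A^+(-n)|$ through the conjugation involution $(a,b,c)\mapsto(-a,-b,-c)$ and the free $\langle y\rangle$-action on signatures, whose unique fixed point $(1,2,2)$ isolates $n=3$ (Lemmas \ref{G_y}, \ref{fixed}, \ref{triples}). Two caveats on that stage. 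First, as literally stated your descent step fails for totally negative $\al$: there $x(\al)$ has the \emph{same} norm and $y(\al), y^2(\al)$ have strictly \emph{larger} norm (Lemma \ref{positive}(2)); the working descent, as in the paper, alternates --- from totally negative $\al$ pass to the totally positive $x(\al)$, then to whichever of $yx(\al), y^2x(\al)$ is totally negative, which does have smaller norm. Second, ``uniqueness from minimality of norm'' is not by itself conclusive, since a second totally positive triple in the same orbit would equally be a terminus of the descent; the paper instead argues by traversing outward from a triple and checking one never re-enters an all-positive triangle (admittedly also informally). Where you genuinely diverge is the congruence mod $4$. The paper works directly on $A^+(-n)$: the swap $(a,b,c)\mapsto(a,c,b)$ pairs off $A^+_{b\neq c}(-n)$, so $|A^+(-n)|\equiv|A^+_{b=c}(-n)| \pmod 2$, and then $|A^+_{b=c}(-n)|$ is computed by a three-case analysis culminating in a binomial-coefficient sum equal to $2^{r-1}$. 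You instead push the swap $\tau$ down to the set of triples via the relation $\tau y=y^{-1}\tau$ (which is correct) and count $\tau$-fixed triples; this is equivalent because $k=|A^+(-n)|/3\equiv|A^+(-n)|\pmod 2$, though you should note that for $n\neq 3$ a $\tau$-fixed triple contains \emph{exactly one} signature with $b=c$ (two such would force $b=2a$ and $n=3a^2$), so the fixed-triple count really is $|A^+_{b=c}(-n)|$. Your evaluation $|A^+_{b=c}(-n)|=d(n)/2$ for odd $n$ (divisors $d\mid n$ with $d<\sqrt n$, parity automatic) and $=0$ for even square-free $n$ (a common parity of $b\pm a$ would force $4\mid n$) is cleaner than the paper's Cases 1--3 and reaches the same values $2^{r-1}$ and $0$; combined with $d(n)=2^r$ it gives the congruence for all $n\neq 1,2$, with $n=2$ excluded exactly because $2^{r-1}=1$ there. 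So: same skeleton for the count, a tidier but equivalent argument for the congruence, and the only real work left in both write-ups is making the termination and uniqueness of the descent fully rigorous.
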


For two positive integers $k\leq m$, denote by $d_{\leq k} (m)$ the number of positive divisors of $m$ which are less than or equal to $k$. For instance, $d_{\leq 4}(10)=2$ and $d_{\leq 10}(10)=d(10)=4$.

\begin{thm}\label{orbits 2}
Let $n > 3$ be a square-free integer. Then the number of orbits in $M_{-n}$ under the action of $G$ is:
$$|\mathcal{O}^G(M_{-n})|=d(n)+ \cfrac{2}{3}\;\sum_{i=1}^{\lfloor n/2 \rfloor}[ d(i^2+n)-2d_{\leq i} (i^2+n)].$$
\end{thm}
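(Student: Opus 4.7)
The plan is to invoke Theorem \ref{orbits 1}, which for $n>3$ already gives $|\mathcal{O}^G(M_{-n})|=d(n)+\tfrac{2}{3}|A^+(-n)|$, so it suffices to establish
$$|A^+(-n)|=\sum_{a=1}^{\lfloor n/2\rfloor}\bigl[d(a^2+n)-2\,d_{\le a}(a^2+n)\bigr].$$
I would partition $A^+(-n)$ according to the first coordinate $a$ and, for each fixed $a\ge 1$, count the number $N(a)$ of pairs $(b,c)\in\N^2$ satisfying $bc=a^2+n$, $b>a$, and $c>a$.

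The key step is a clean inclusion--exclusion for $N(a)$. Since every positive divisor $c$ of $a^2+n$ determines the unique ordered factorization $(b,c)=((a^2+n)/c,\,c)$, the total number of such factorizations is $d(a^2+n)$. The number of factorizations with $c\le a$ is, by definition, $d_{\le a}(a^2+n)$, and by the symmetry $(b,c)\leftrightarrow(c,b)$ on the divisor set of $a^2+n$, the number with $b\le a$ is also $d_{\le a}(a^2+n)$. The decisive observation is that no factorization can simultaneously satisfy $b\le a$ and $c\le a$, since this would force $bc\le a^2<a^2+n$ (using $n\ge 1$), a contradiction. Hence the intersection term of the inclusion--exclusion vanishes and $N(a)=d(a^2+n)-2\,d_{\le a}(a^2+n)$.

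It remains to justify the upper summation limit $\lfloor n/2\rfloor$. A triple $(a,b,c)\in A^+(-n)$ forces $b,c\ge a+1$, hence $(a+1)^2\le bc=a^2+n$, so $a\le(n-1)/2$; for any strictly larger $a$ the structural count $N(a)$ is zero. When $n$ is odd this cutoff coincides with $\lfloor n/2\rfloor$ and nothing is added. When $n$ is even, the only extra value is $a=n/2$, at which $(a+1)^2>a^2+n$ places $\sqrt{a^2+n}$ strictly between $a$ and $a+1$; thus $a^2+n$ is not a perfect square, its divisors pair up via $c\mapsto(a^2+n)/c$ into one $\le a$ and one $>a$, so $d(a^2+n)=2\,d_{\le a}(a^2+n)$ and the formal summand vanishes. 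Assembling these ingredients with Theorem \ref{orbits 1} yields the claimed identity.

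There is no genuine obstacle in this plan: the argument is a careful divisor-counting exercise, and the only subtleties to handle with care are the emptiness of the ``both $\le a$'' configuration in the inclusion--exclusion and the harmlessness of the stated upper summation limit in the even-$n$ boundary case.
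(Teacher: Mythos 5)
Your proposal is correct and follows essentially the same route as the paper: reduce via Theorem \ref{orbits 1} to counting $|A^+(-n)|$, partition by the first coordinate, and for each fixed $a$ apply the same inclusion--exclusion on ordered factorizations of $a^2+n$ (with the same observation that $b\le a$ and $c\le a$ cannot hold simultaneously since $bc=a^2+n>a^2$). Your extra verification that the summand vanishes at the boundary value $a=n/2$ for even $n$ is a harmless refinement of what the paper obtains implicitly from Lemma \ref{a,b,c}.
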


\section{{\bf Lemmas and Proofs of Theorems \ref{orbits 1} and \ref{orbits 2}}}

\subsection{{\bf Lemmas}}\hfill
Preparing for the proof of Theorems \ref{orbits 1} and \ref{orbits 2}, we consider some lemmas, some of which are interesting in their own right.

The following lemma shows that the sign of the denominators of elements in any given orbit is the same.
\begin{lem}\label{sign}
For $\al \in M_{-n}$, $\mbox{sign}(c_\beta)=\mbox{sign}(c_\al)$ for any $\beta \in \al^G$.
\end{lem}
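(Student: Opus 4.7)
The plan is to reduce the statement to checking what happens under the two generators $x, y$ of $G$ and then induct on word length. Since $\beta \in \al^G$ means $\beta = t(\al)$ for some $t \in G$, and since $\{x, y\}$ generates $G$, I would write $t = g_k g_{k-1} \cdots g_1$ with each $g_i \in \{x, y\}$, and run an induction on $k$. The induction step reduces everything to showing: for each $\gamma \in M_{-n}$ and each $g \in \{x, y\}$, $\mbox{sign}(c_{g(\gamma)}) = \mbox{sign}(c_\gamma)$.

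This one-step assertion is essentially a reading of Table~1. For $g = x$ the table records $c_{x(\gamma)} = b_\gamma$, and for $g = y$ it records $c_{y(\gamma)} = b_\gamma$ as well. At this point I would invoke Remark~\ref{remark 2}: since $b_\gamma c_\gamma = a_\gamma^2 + n \geq n \geq 1 > 0$, the integers $b_\gamma$ and $c_\gamma$ are both nonzero and share the same sign. Hence $\mbox{sign}(c_{x(\gamma)}) = \mbox{sign}(c_{y(\gamma)}) = \mbox{sign}(b_\gamma) = \mbox{sign}(c_\gamma)$, which closes the induction and yields the lemma.

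I do not expect any serious obstacle here. The only sanity check worth mentioning is that $b_\gamma \neq 0$ along every intermediate step (so that the word ``sign'' is well defined at each stage), but this is automatic from the positivity of $a_\gamma^2 + n$. In short, the lemma is a short corollary of the generator signatures in Table~1 combined with the same-sign observation in Remark~\ref{remark 2}, extended to all of $G$ by induction on generator word length.
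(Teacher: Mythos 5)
Your proposal is correct and follows essentially the same route as the paper: reduce to the generators via Table~1 and conclude from the same-sign observation of Remark~\ref{remark 2} that $c_{x(\al)}$ and $c_{y(\al)}$ (both equal to $b_\al$) share the sign of $c_\al$. The only cosmetic difference is that you make the induction on word length explicit and obtain $y^2$ by composing, whereas the paper checks $x$, $y$, and $y^2$ directly from the table.
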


\begin{proof}
It is sufficient to show that $c_{x(\al)}$, $c_{y(\al)}$, and $c_{y^2(\al)}$ have the same sign as $c_\al$. By Remark \ref{remark 2}, $b_\al$ and $c_\al$ have the same sign. Since $c_{x(\al)}=c_{y(\al)}=b_\al$ (Table 1), $c_{x(\al)}$ and $c_{y(\al)}$ have the same sign as $c_\al$. Since $b_{y^2(\al)}=c_\al$, $c_{y^2(\al)}$ have the same sign as $c_\al$ as well (because $c_{y^2(\al)}$ and $b_{y^2(\al)}$ have the same sign).
\end{proof}

The effect of the action of $x$ on elements of $M_{-n}$ and their norms is given below.

\begin{lem}\label{x}
Let $\al=\cfrac{a+\sqrt{-n}}{c}\in M_{-n}$. Then,
\begin{enumerate}
\item[1.] $\al$ is totally negative if and only if $x(\al)$ is totally positive.
\item[2.] $\|\al\|=\|x(\al)\|$.
\item[3.] $\al$ has norm zero if and only if $x(\al)$ has norm zero.
\end{enumerate}
\end{lem}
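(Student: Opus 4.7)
The plan is to read everything off Table~1. From the first row of that table, the signature of $x(\al)$ is $(a_{x(\al)},b_{x(\al)},c_{x(\al)})=(-a,c,b)$, so each of the three claims reduces to a one-line computation once the right characterization of totally positive/negative elements is in place.

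Parts 2 and 3 fall out immediately. For Part 2, $\|x(\al)\|=|a_{x(\al)}|=|-a|=|a|=\|\al\|$. For Part 3, $\al$ has norm zero precisely when $|a|=0$, and by Part 2 this holds if and only if $|-a|=0$, i.e.\ $x(\al)$ has norm zero.

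The substantive piece is Part 1, and for this I would use the equivalent sign-based characterization in Remark~\ref{remark 2} rather than Definition~1. By that remark, $\al$ is totally negative iff either (i) $a_\al<0$ and $b_\al,c_\al>0$, or (ii) $a_\al>0$ and $b_\al,c_\al<0$. Substituting the signature $(-a,c,b)$ of $x(\al)$ into the same remark, case (i) produces $a_{x(\al)}=-a>0$, $b_{x(\al)}=c>0$, $c_{x(\al)}=b>0$, while case (ii) produces $a_{x(\al)},b_{x(\al)},c_{x(\al)}$ all negative; in both situations $x(\al)$ is totally positive. The converse direction is handled by the same case analysis run on $x(\al)$ (alternatively, since $x^2=1$ in $G$ one gets it by applying the forward direction to $x(\al)$ in place of $\al$).

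No real obstacle is anticipated: this is a bookkeeping argument with signs, and the only subtlety is remembering that the definition of \emph{totally negative} requires the particular mixed-sign pattern of Remark~\ref{remark 2} rather than simply $a_\al c_\al<0$ together with arbitrary signs on $b_\al$, but this is already guaranteed by the identity $b_\al c_\al>0$ observed in that remark.
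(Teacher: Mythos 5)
Your proof is correct and is essentially the paper's own: both read the signature $(-a,c,b)$ of $x(\al)$ from Table~1, dispose of Parts 2 and 3 immediately via $a_{x(\al)}=-a$, and prove Part 1 by the two-case sign analysis of Remark~\ref{remark 2}, running the same cases in reverse for the converse. One small caveat: your parenthetical shortcut does not work as stated, since applying the forward implication to $x(\al)$ yields ``$x(\al)$ totally negative $\Rightarrow$ $\al$ totally positive,'' which is not the desired converse ``$x(\al)$ totally positive $\Rightarrow$ $\al$ totally negative''; the explicit case analysis you give first is the one to keep.
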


\begin{proof}\hfill
\begin{itemize}
\item[1.] From Table 1, notice that $a_{x(\al)}=-a$, $b_{x(\al)}=c$, and $c_{x(\al)}=b$. Suppose that $\al$ is totally negative. If $a<0$ and $b,c>0$, then it is clear that $a_{x(\al)}>0$ and $b_{x(\al)}, c_{x(\al)} >0$, which implies that $x(\al)$ is totally positive. The case $a>0$ and $b,c<0$ is similar. For the converse, suppose that $x(\al)$ is totally positive. If $a_{x(\al)}, b_{x(\al)}, c_{x(\al)}>0$, then $a<0$ and $b,c >0$, which implies that $\al$ is totally negative. The case $a_{x(\al)}, b_{x(\al)}, c_{x(\al)}<0$ is similar.

\item[2.] As $a_{x(\al)}=-a$, the claim follows immediately.

\item[3.] Follows from 2.
\end{itemize}
\end{proof}

Some aspects of the actions of $y$ and $y^2$ on elements of $M_{-n}$ and their norms are given below.

\begin{lem}\label{positive} Let $\al=\cfrac{a+\sqrt{-n}}{c}\in M_{-n}$.
\begin{enumerate}
\item[1.] If $\al$ has norm zero, then $y(\al)$ and $y^2(\al)$ are both totally positive.
\item[2.] If $\al$ is totally negative, then $y(\al)$ and $y^2(\al)$ are both totally positive with $\|\al\|<\|y(\al)\|$ and $\|\al\|< \|y^2(\al)\|$.
\item[3.] The three elements $\al$, $y(\al)$, and $y^2(\al)$ are all totally positive if and only if either ($0<a$, $a<b$, and $a<c$) or ($0>a$, $a>b$, and $a>c$).
\end{enumerate}
\end{lem}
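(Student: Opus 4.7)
The plan is to read off the signatures of $y(\al)$ and $y^2(\al)$ from Table 1 and combine them with the observation in Remark \ref{remark 2} that $b_\al$ and $c_\al$ always share the same (nonzero) sign. For part (1), I would substitute $a_\al = 0$ into Table 1; this collapses the signatures of $y(\al)$ and $y^2(\al)$ to triples whose entries lie in $\{b,c,b+c\}$. Since $b$ and $c$ are nonzero with a common sign, each such triple has all three entries of a common nonzero sign, which is the definition of totally positive from Remark \ref{remark 2}.

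For part (2), I would split the hypothesis that $\al$ is totally negative into its two sub-cases from Remark \ref{remark 2}: either $a<0$ with $b,c>0$, or $a>0$ with $b,c<0$. In each sub-case, a direct inspection of the entries $b-a$, $-2a+b+c$, $b$ (for $y(\al)$) and $c-a$, $c$, $-2a+b+c$ (for $y^2(\al)$) shows they share a common nonzero sign, namely the sign of $-a$, so both $y(\al)$ and $y^2(\al)$ are totally positive. The norm increase then follows immediately, since in the first sub-case $|b-a| = b + |a| > |a|$ and $|c-a| = c + |a| > |a|$, with a symmetric estimate in the second sub-case; strictness comes from $b, c \ne 0$.

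Part (3) is the main one to organize carefully. I would first pin down the global sign pattern and then convert total positivity into explicit inequalities on $a,b,c$. Observe that $c_{y(\al)} = b_\al$ and $b_{y^2(\al)} = c_\al$; applying Remark \ref{remark 2} to $y(\al)$ and $y^2(\al)$ therefore forces $b-a$ and $c-a$ to share the signs of $b$ and $c$ respectively. Combined with $\al$ being totally positive, this restricts the signature of $\al$ to the two patterns $(+,+,+)$ and $(-,-,-)$. In the first pattern, total positivity of $y(\al)$ and $y^2(\al)$ becomes $b-a>0$, $c-a>0$, and $-2a+b+c>0$; the third inequality is a consequence of the first two and is therefore redundant, yielding exactly $0<a$, $a<b$, $a<c$. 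The $(-,-,-)$ pattern is symmetric. The converse is a direct substitution back into Table 1. I anticipate no conceptual obstacle here; the only care needed is in tracking the redundancy of $-2a+b+c > 0$ and handling the all-negative branch in parallel with the all-positive one.
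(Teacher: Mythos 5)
Your proposal is correct and follows essentially the same route as the paper's own proof: read the signatures of $y(\al)$ and $y^2(\al)$ from Table 1, invoke Remark \ref{remark 2} that $b_\al$ and $c_\al$ share a nonzero sign, and split into the all-positive and all-negative sign patterns. The minor repackagings (phrasing part 1 via the entries $\{b,c,b+c\}$, and noting explicitly that $-2a+b+c>0$ is redundant given $b-a>0$ and $c-a>0$) do not change the argument.
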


\begin{proof}\hfill
\begin{itemize}
\item[1.] Assume that $\|\al\|=0$ (i.e. $\al=\sqrt{-n}/c$). If $c>0$, it follows from Table 1 and Remark \ref{remark 2} that $a_{y(\al)}=b>0$ and $c_{y(\al)}=b>0$ and, thus, $y(\al)$ is totally positive. Similarly, $y^2(\al)$ is totally positive. If $c<0$, a similar argument shows that $y(\al)$ and $y^2(\al)$ are both totally positive in this case as well.

\item[2.] From Table 1, notice that $a_{y(\al)}=b-a$, $b_{y(\al)}=-2a+b+c$, $c_{y(\al)}=b$, $a_{y^2(\al)}=c-a$, $b_{y^2(\al)}=c$, and $c_{y^2(\al)}=-2a+b+c$. If $a<0$ and $b,c>0$, then it is clear that all the values $a_{y(\al)}, b_{y(\al)}, c_{y(\al)}, a_{y^2(\al)}, b_{y^2(\al)}, c_{y^2(\al)}$ are positive and, therefore, both $y(\al)$ and $y^2(\al)$ are totally positive. As for the norms in this case, we have
     \begin{center}
     $\|y(\al)\|=|b-a|=b-a>-a=\|\al\|\;\; \mbox{and}\;\; \|y^2(\al)\|=|c-a|=c-a> -a=\|\al\|.$\end{center}
     The case $a>0$ and $b,c<0$ is dealt with in a similar manner.

\item[3.] Suppose that $\al$, $y(\al)$, and $y^2(\al)$ are all totally positive. Since $\al$ is totally positive, $a,b,c>0$ or $a,b,c<0$. Assume that $a,b,c>0$. Since $c_{y(\al)}=b>0$ and $y(\al)$ is totally positive, $a_{y(\al)}=b-a>0$. So $b>a$ as desired. On the other hand, since $b_{y^2(\al)}=c>0$ (and, hence, $c_{y^2(\al)}>0$) and $y^2(\al)$ is totally positive, $a_{y^2(\al)}=c-a>0$. So $c>a$ as desired. Similarly, if $a,b,c<0$, it follows that $a>b$ and $a>c$.

Conversely, suppose that $0<a$, $a<b$, and $a<c$. Since $ac>0$, $\al$ is totally positive. As $a_{y(\al)}=b-a>0$ and $c_{y(\al)}=b>0$, $y(\al)$ is totally positive too. Also, as $a_{y^2(\al)}=c-a>0$ and $b_{y^2(\al)}=c>0$ (and, hence, $c_{y^2(\al)}>0$), $y^2(\al)$ is totally positive as well. A similar argument works if $0>a$, $a>b$, and $a>c$.
\end{itemize}
\end{proof}

\begin{remark}\label{remark 3}
It is apparent from the above lemma that for any triple $\al, y(\al), y^2(\al)$ of elements of $M_{-n}$, either all three elements are totally positive, one is totally negative and the other two are totally positive, or one is of norm zero and the other two are totally positive. This remark shall show to be useful shortly. In the terminology of coset diagrams (see \cite{MZ}, \cite{Mush1}, or \cite{T} for example), the triangle whose vertices are $\al, y(\al), y^2(\al)$ always has one of three properties: either all vertices are totally positive, one vertex is totally negative and the other two are totally positive, or one vertex is of norm zero and the other two are totally positive. We chose, however, to not use the machinery of coset diagrams in this paper as things could be handled using some combinatorial arguments.
\end{remark}

\begin{lem}\label{negative}
Every orbit in $M_{-n}$ under the action of $G$ contains a totally negative element.
\end{lem}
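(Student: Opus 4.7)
The plan is to invoke the trichotomy from Remark \ref{remark 2} (every $\alpha \in M_{-n}$ is totally positive, totally negative, or has norm zero) together with Lemma \ref{x}(1) and Lemma \ref{positive}(1) to exhibit, for any starting element $\alpha$ of a given orbit, an explicit totally negative element reachable from $\alpha$ by a very short word in $x$ and $y$. No descent or norm minimization is needed; a case analysis on $\alpha$ suffices.

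Let $\alpha\in M_{-n}$ be arbitrary. In the first case, if $\alpha$ is already totally negative, then $\alpha\in\alpha^G$ is itself the desired element. In the second case, suppose $\alpha$ is totally positive. Since $x$ is an involution, $\alpha = x(x(\alpha))$, so setting $\beta = x(\alpha)$ gives $x(\beta) = \alpha$ totally positive; the ``if'' direction of Lemma \ref{x}(1) then forces $\beta$ to be totally negative, and $\beta = x(\alpha) \in \alpha^G$. In the third case, suppose $\alpha$ has norm zero; then Lemma \ref{positive}(1) yields that $y(\alpha)$ is totally positive, and applying the reasoning of the second case to $y(\alpha)$ shows that $x(y(\alpha)) \in \alpha^G$ is totally negative.

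I do not anticipate a substantive obstacle. The only subtle point is to recognize that Lemma \ref{x}(1) is phrased as a biconditional between being totally negative and the image under $x$ being totally positive, whereas what is needed in case two is the slightly different implication ``totally positive $\Rightarrow$ $x$-image totally negative''; this is obtained immediately from the biconditional by using $x^2 = 1$, as indicated above. Once this observation is made, the three cases exhaust the trichotomy and the proof closes in a few lines.
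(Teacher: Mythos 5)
Your proposal is correct and follows essentially the same route as the paper: the trichotomy of Remark \ref{remark 2}, Lemma \ref{x} for the totally positive case, and Lemma \ref{positive} followed by $x$ for the norm-zero case. Your extra remark about extracting ``totally positive $\Rightarrow$ $x$-image totally negative'' from the biconditional via $x^2=1$ is a careful touch the paper leaves implicit.
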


\begin{proof}
Consider an orbit $\al^G$ for some $\al=\cfrac{a+\sqrt{-n}}{c}\in M_{-n}$. By Remark \ref{remark 2}, $\al$ is either totally negative, totally positive, or has norm zero. If $\al$ is totally negative, then there is nothing to prove. If $\al$ is totally positive, then by Lemma \ref{x}, $x(\al)\in \al^G$ is totally negative. Finally, if $\|\al\|=0$, then it follows from Lemma \ref{positive} that $y(\al)$, for instance, is totally positive and, hence from Lemma \ref{x}, $xy(\al)\in \al^G$ is totally negative.
\end{proof}

The following lemma specifies the elements of $\C$ fixed by $x$ or $y$.

\begin{lem}\label{fixed}
Upon the action of $G$ on the complex numbers $\mathbb{C}$, the only numbers fixed by $x$ are $i, i/(-1)\in M_{-1}$ and the only numbers fixed by $y$ are $(1+\sqrt{-3})/2, (-1+\sqrt{-3})/(-2)\in M_{-3}$.
\end{lem}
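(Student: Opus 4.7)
The plan is to solve the two fixed-point equations directly and then verify that the resulting complex numbers belong to $M_{-1}$ and $M_{-3}$ respectively.

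First, I would handle the generator $x$. The equation $x(z)=z$ becomes $-1/z=z$, i.e. $z^2=-1$, whose only solutions in $\C$ are $z=\pm i$. Writing these in the standard form $(a+\sqrt{-n})/c$, we get $i=(0+\sqrt{-1})/1$ and $-i=(0+\sqrt{-1})/(-1)$. To confirm membership in $M_{-1}$, note that in both cases $a=0$, $c=\pm 1$, and $b=(a^2+1)/c=\pm 1\in\Z$, so both are in $M_{-1}$.

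Next, for $y$, the equation $y(z)=z$ reads $(z-1)/z=z$, i.e. $z^2-z+1=0$, which has discriminant $1-4=-3$. Solving yields $z=(1\pm\sqrt{-3})/2$. Rewriting, these are $(1+\sqrt{-3})/2$ and $(-1+\sqrt{-3})/(-2)$. For the first, $a=1$, $c=2$, $b=(1+3)/2=2\in\Z$; for the second, $a=-1$, $c=-2$, $b=(1+3)/(-2)=-2\in\Z$. Hence both lie in $M_{-3}$.

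There is no genuine obstacle here: both fixed-point equations are quadratics with at most two solutions over $\C$, so once those solutions are exhibited and checked to lie in the advertised sets the claim is complete. The only minor care needed is to list each fixed point in its correct normal form $(a+\sqrt{-n})/c$ so that the signature verification for membership in $M_{-n}$ is unambiguous.
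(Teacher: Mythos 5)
Your proposal is correct and follows essentially the same route as the paper: solve $x(z)=z$ to get $z^2=-1$ and $y(z)=z$ to get $z^2-z+1=0$, then read off the roots. The only difference is that you additionally verify the (routine) membership of the fixed points in $M_{-1}$ and $M_{-3}$, which the paper leaves implicit.
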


\begin{proof}
Let $z\in \C$ be such that $x(z)=z$. Then $z^2=-1$, which implies that $z=\pm i$. On the other hand, if $y(z)=z$, then $z^2-z+1=0$, which implies that $z=\cfrac{1\pm \sqrt{-3}}{2}$.
\end{proof}

Recall that $$T^+(-n):=\{\{\al, y(\al), y^2(\al)\}\; |\; \mbox{$\al, y(\al), y^2(\al)\in M_{-n}$ are all totally positive}\},$$
and consider the two sets of signatures of totally positive elements of $M_{-n}$ (by Lemma \ref{positive}):
$$A^+(-n):=\{(a,b,c)\in \N^3\;|\; \cfrac{a+\sqrt{-n}}{c}\in M_{-n},\; b=\cfrac{a^2+n}{c}, \; b>a, \; c>a\}$$
and
$$A^-(-n):=\{(-a,-b,-c)\in \N^3\;|\; \cfrac{a+\sqrt{-n}}{c}\in M_{-n},\; b=\cfrac{a^2+n}{c}, \; b<a, \; c<a\}.$$

We use, next, the action of the cyclic subgroup $G_y$ generated by $y$ on $M_{-n}$ induced from the action of $G$ to define an action of $G_y$ on both $A^+(-n)$ and $A^-(-n)$.

\begin{lem}\label{G_y}
Let $G_y$ be the cyclic subgroup of $G$ generated by $y$ and $A^+(-n)\neq \varnothing$. Then, $A^+(-n)$ and $A^-(-n)$ are $G_y$-sets.
\end{lem}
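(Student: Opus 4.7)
The plan is to identify $A^+(-n)$ (resp.\ $A^-(-n)$) with the subset $M^+_{-n}\subseteq M_{-n}$ (resp.\ $M^-_{-n}$) of those $\al$ whose signature lies in $A^+(-n)$ (resp.\ $A^-(-n)$). The signature map $\al\mapsto(a_\al,b_\al,c_\al)$ is a bijection onto its image, since $\al=(a+\sqrt{-n})/c$ determines $c$ (and hence $a$ and $b$); this identification transfers the $G_y$-action on $M^{\pm}_{-n}$ inherited from $G$ to an action on $A^{\pm}(-n)$. Because $G_y=\{1,y,y^2\}$ and $y^2=y^{-1}$, it will be enough to show that $y$ carries $M^+_{-n}$ into itself (and similarly $M^-_{-n}$).

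For the $A^+(-n)$ case, I would fix $\al\in M^+_{-n}$ with signature $(a,b,c)$, so $0<a$, $a<b$, $a<c$. Lemma \ref{positive}(3) immediately gives that $\al$, $y(\al)$, $y^2(\al)$ are all totally positive. The key move is then to reapply Lemma \ref{positive}(3) to the element $y(\al)$: the triple $y(\al), y^2(\al), y^3(\al)=\al$ is the same (totally positive) triple, so the lemma forces the signature $(a',b',c')$ of $y(\al)$ to satisfy either the $A^+(-n)$-branch inequalities $0<a'$, $a'<b'$, $a'<c'$ or the $A^-(-n)$-branch inequalities $0>a'$, $a'>b'$, $a'>c'$. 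To select the correct branch, I invoke Lemma \ref{sign}: $c_{y(\al)}$ has the same sign as $c_\al>0$, so $c_{y(\al)}>0$, which rules out the $A^-(-n)$ branch. Hence $(a',b',c')\in A^+(-n)$, i.e.\ $y(\al)\in M^+_{-n}$.

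The argument for $M^-_{-n}$ is identical after replacing the positivity of $c_\al$ with its negativity in the final appeal to Lemma \ref{sign}. The main obstacle here is psychological rather than technical: the brute-force alternative would unfold Table 1 to extract the new signature $(b-a,\,-2a+b+c,\,b)$ and verify by hand inequalities like $b-a>0$, $b>b-a$, and $-2a+b+c>b-a$ directly from $0<a<b$ and $0<a<c$; the two-fold use of Lemma \ref{positive}(3) together with Lemma \ref{sign} bypasses all of this and finishes the argument in a few conceptual lines.
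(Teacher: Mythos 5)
Your proof is correct, and its overall scaffolding (the signature map is a bijection onto its image, so the $G_y$-action transfers from $M_{-n}$ once invariance is checked, and $y^3=1$ reduces everything to the single generator $y$) matches the paper's. Where you differ is in how invariance under $y$ is established. The paper simply reads off the new signature $y\cdot(a,b,c)=(b-a,\,-2a+b+c,\,b)$ from Table 1 and checks the three inequalities $0<b-a$, $\;b-a<-2a+b+c$, and $b-a<b$ directly from $0<a$, $a<b$, $a<c$ --- each is a one-line consequence, so the ``brute-force alternative'' you describe is in fact about as short as your route. Your alternative --- apply the converse direction of Lemma \ref{positive}(3) to get a totally positive triple, apply the forward direction to $y(\al)$ (whose associated triple is the same set) to conclude that its signature satisfies one of the two branches, then use Lemma \ref{sign} to pin down the branch via the sign of $c_{y(\al)}$ --- is logically sound, creates no circularity (both cited lemmas precede this one), and is arguably more conceptual in that it reuses the established characterization of totally positive triples rather than recomputing inequalities; it also makes the $A^-(-n)$ case genuinely symmetric rather than ``similar.'' One small bookkeeping point: elements of $A^-(-n)$ are by definition the \emph{negatives} $(-a,-b,-c)$ of the relevant signatures, so ``those $\al$ whose signature lies in $A^-(-n)$'' should read ``those $\al$ the negative of whose signature lies in $A^-(-n)$''; this is cosmetic and does not affect the argument.
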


\begin{proof}
For an element $(a,b,c)\in A^+(-n)$, there corresponds the unique (totally positive) element $\al$ of $M_{-n}$ whose signature is $(a,b,c)$. Using this correspondence, the action of $G_y$ on $M_{-n}$ induced from the action of $G$ on $M_{-n}$ can be used to define an action of $G_y$ on $A^+(-n)$ by letting the action of $y$ takes the signature of $\al$ to the signature of $y(\al)$ (according to Table 1); that is, $y\cdot (a,b,c)=(b-a, -2a+b+c, b)$. Note that $(b-a, -2a+b+c, b)$ is an element of $A^+(-n)$ too because  $0<a_{y(\al)}=b-a$, $a_{y(\al)}=b-a<b-a+c-a=-2a+b+c=b_{y(\al)}$, and $a_{y(\al)}=b-a<b=c_{y(\al)}$. Verifying that this proposed action of $G_y$ on $A^+(-n)$ is really so is a straightforward matter. A similar proof works for $A^-(-n)$.
\end{proof}

The following two lemmas show, in particular, that the sets $A^+(-n)$ and $T^+(-n)$ are finite and give a formula that compares their respective cardinalities for $n\neq 3$.

\begin{lem}\label{a,b,c}
If $(a,b,c) \in A^+(-n)$, then $a\leq n/2$ and $b,c \leq (n+1)/2$. Furthermore, $|A^+(-n)| \leq n(n+1)/4$.
\end{lem}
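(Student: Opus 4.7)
The plan is to exploit the integrality of $a, b, c$ together with the defining identity $bc = a^2 + n$ and the strict inequalities $a < b$, $a < c$. First, for $a \leq n/2$: since $b$ and $c$ are positive integers strictly greater than $a$, we have $b, c \geq a+1$, so $(a+1)^2 \leq bc = a^2 + n$, which simplifies to $2a + 1 \leq n$, hence $a \leq (n-1)/2 \leq n/2$.

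Next, for the bound $b \leq (n+1)/2$ (and likewise $c$ by symmetry): from $c \geq a + 1$ we get $b \leq (a^2+n)/(a+1)$, so it suffices to show $(a^2+n)/(a+1) \leq (n+1)/2$. Clearing denominators reduces this to the inequality $2a^2 - (n+1)a + (n-1) \leq 0$, and the key observation is that this quadratic factors cleanly as $(a-1)(2a - n + 1)$. Since $1 \leq a$ (as $a \geq 1$ for any element of $A^+(-n)$; $a = 0$ would force $\|\alpha\| = 0$, contradicting total positivity) and $a \leq (n-1)/2$ from the first step, the two factors have opposite signs (or one of them vanishes), making the product nonpositive. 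The corresponding bound on $c$ follows by swapping the roles of $b$ and $c$ in the same argument.

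Finally, for the cardinality estimate: the identity $c = (a^2+n)/b$ shows that the triple $(a,b,c)$ is determined by the pair $(a,b)$. Since $a$ takes at most $\lfloor n/2 \rfloor$ positive integer values and $b$ at most $\lfloor (n+1)/2 \rfloor$, we obtain $|A^+(-n)| \leq \lfloor n/2 \rfloor \cdot \lfloor (n+1)/2 \rfloor \leq n(n+1)/4$. The main non-routine step in the whole proof is spotting the factorization $2a^2 - (n+1)a + (n-1) = (a-1)(2a - n + 1)$; once that is in hand, everything else reduces to direct bookkeeping.
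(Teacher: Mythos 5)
Your proof is correct, and while it follows the same skeleton as the paper's (bound $a$ first, then use $c\geq a+1$ to get $b\leq (a^2+n)/(a+1)$, then count pairs $(a,b)$), both key inequalities are handled by genuinely different and cleaner arguments. For the bound on $a$, you argue directly from $(a+1)^2\leq bc=a^2+n$ to get $2a+1\leq n$, i.e.\ the slightly sharper $a\leq (n-1)/2$; the paper instead runs a proof by contradiction, writing $a=n/2+t$ and $c=a+s$ and deriving an absurd inequality. For the bound on $b$, your factorization $2a^2-(n+1)a+(n-1)=(a-1)(2a-n+1)$ (which checks out), combined with $a\geq 1$ and $2a\leq n-1$, yields $(a^2+n)/(a+1)\leq (n+1)/2$ as a genuine real-number inequality in one step; the paper only establishes the weaker strict bound $b<(n+2)/2$ via a chain of implications requiring $a>1$ (so it must treat $a=1$ separately), and then invokes the integrality of $b$ together with the parity of $n$ to shave the bound down to $(n+1)/2$. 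Your route buys uniformity (no case split on $a$, no appeal to integrality or parity at the end) and a marginally stronger intermediate bound on $a$, at the cost only of spotting the factorization; the counting step at the end is identical to the paper's. The one point worth making explicit is that $a\geq 1$ follows immediately from $A^+(-n)\subseteq\N^3$ (the paper's $\N$ excludes $0$, as its notation $\N\cup\{0\}$ for the codomain of the norm shows), so your parenthetical justification, while harmless, is not strictly needed.
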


\begin{proof}
Let $(a,b,c)\in A^+(-n)$. For the claimed bound on $a$, suppose to the contrary that $a>n/2$. So, $a=n/2 + t $ for some $t\geq 1/2$. Assume that $b\geq c$ (the case $b\leq c$ is treated similarly). Since $c>a$, set $c=a+s$ for some $s\in \N$. Now, $a^2+n = bc \geq c^2$ gives $(n/2+t)^2+n \geq (n/2+t+s)^2$, which implies the absurd inequality $n\geq s^2+ns +2ts\geq 1+n+2t\geq n+3$. Thus, $a\leq n/2$.

Due to the symmetry between $b$ and $c$, it suffices to prove the claimed bound for one of them, say $b$. Since $0<a<c$, $a+1\leq c $. So, $b=(a^2+n)/c \leq (a^2+n)/(a+1)$. If $a=1$, then $b\leq (n+1)/2$ and we are done in this case. Assume that $a>1$. We show first that $b< (n+2)/2$. We have the following string of implications:
\begin{align*}
a\leq n/2 &\Rightarrow 2a \leq n\\ &\Rightarrow 2a < n+2/(a-1)\\ &\Rightarrow 2a+1 < n+1 +2/(a-1)=n+(a+1)/(a-1)\\ &\Rightarrow (2a+1)(a-1)-(a+1) < n(a-1)\\ &\Rightarrow 2a^2-2a-2 < na-n\\ &\Rightarrow 2a^2+2n < na+2a +n +2\\ &\Rightarrow 2(a^2+n) < (n+2)(a+1)\\ &\Rightarrow b\leq (a^2+n)/(a+1)< (n+2)/2.
\end{align*}
Now, if $n$ is odd, then $(n+2)/2\in (1/2)+\Z$ and, so, $b\leq (n+2)/2 -1/2 =(n+1)/2$. If $n$ is even, then $(n+2)/2\in\Z$ and, so, $b \leq (n+2)/2 -1 =n/2 < (n+1)/2$. This proves the claimed upper bound of $b$ (and of $c$, by symmetry).

As for the bound on $|A^+(-n)|$, to determine any element $(a,b,c)\in A^+(-n)$ it suffices to be given $a$ and $b$ (as $c$ would then be determined by $c=(a^2+n)/b$) or to be given $a$ and $c$ (as $b$ would then be determined by $b=(a^2+n)/c$). So, the number of possible choices for $a$ and $b$ (or for $a$ and $c$) determines the possible cardinality of $A^+(-n)$. Thus, $|A^+(n)| \leq (n/2)\,((n+1)/2)=n(n+1)/4$.
\end{proof}

\begin{lem}\label{triples}\hfill

\begin{itemize}
\item[1.] $|A^+(-n)|=1$ if and only if $n=3$.
\item[2.]$|A^+(-n)| \equiv 0 \;(\mbox{mod} \;3)$ for $n\neq 3$.
\item[3.] $|T^+(-n)|=(2/3)\,|A^+(-n)|$ for $n\neq 3$
\end{itemize}
\end{lem}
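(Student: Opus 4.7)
The plan is to exploit the cyclic action of $G_y = \langle y \rangle$ (of order $3$) on $A^+(-n)$ established in Lemma~\ref{G_y}, and reduce everything to an orbit count.

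For Part 2, since $|G_y|=3$, each $G_y$-orbit in $A^+(-n)$ has size $1$ or $3$. A singleton orbit corresponds to the signature of an element of $M_{-n}$ fixed by $y$, and Lemma~\ref{fixed} says the only complex numbers fixed by $y$ are $(1+\sqrt{-3})/2$ and $(-1+\sqrt{-3})/(-2)$, both lying in $M_{-3}$; among these only the first has signature $(1,2,2) \in A^+(-3)$. So for $n \neq 3$ the $G_y$-action on $A^+(-n)$ is fixed-point-free, every orbit has size $3$, and therefore $3 \mid |A^+(-n)|$.

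For Part 1, I would handle the case $n=3$ directly: Lemma~\ref{a,b,c} forces $a \leq 3/2$, hence $a=1$, after which $bc = a^2 + n = 4$ with the constraints $b, c > 1$ leaves only $(a,b,c) = (1,2,2)$, giving $|A^+(-3)| = 1$. The other direction is immediate from Part~2: when $n \neq 3$, $|A^+(-n)|$ is a nonnegative multiple of $3$ and so cannot equal $1$.

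For Part 3, set $S := \{\alpha \in M_{-n} : \alpha,\, y(\alpha),\, y^2(\alpha)\text{ are all totally positive}\}$. By Lemma~\ref{positive}(3), the signature map gives a bijection $S \leftrightarrow A^+(-n) \sqcup A^-(-n)$. The involution $\tfrac{a+\sqrt{-n}}{c} \longleftrightarrow \tfrac{-a+\sqrt{-n}}{-c}$ (equivalently, the signature swap $(a,b,c) \leftrightarrow (-a,-b,-c)$) identifies $A^+(-n)$ with $A^-(-n)$, so $|S| = 2|A^+(-n)|$. Since $S$ is clearly $G_y$-invariant, and for $n \neq 3$ the element $y$ fixes nothing in $M_{-n}$ (Lemma~\ref{fixed}), $S$ decomposes into $G_y$-orbits of size exactly $3$; each such orbit is precisely a member of $T^+(-n)$, and conversely. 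Thus $|T^+(-n)| = |S|/3 = (2/3)|A^+(-n)|$.

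The whole argument is essentially one orbit-counting idea applied carefully; the only point demanding attention is lining up the two bijections $S \leftrightarrow A^+(-n) \sqcup A^-(-n)$ and $A^+(-n) \leftrightarrow A^-(-n)$, since miscounting by a factor of $2$ would spoil Part~3. Once Lemma~\ref{fixed} has been invoked to rule out $y$-fixed points in $M_{-n}$ for $n \neq 3$, the divisibility claim in Part~2 and the ratio in Part~3 follow immediately, and Part~1 reduces to the small direct verification for $n=3$.
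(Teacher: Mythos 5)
Your proof is correct and follows essentially the same route as the paper: both rest on the order-3 action of $G_y$ from Lemma~\ref{G_y}, the absence of $y$-fixed points for $n\neq 3$ via Lemma~\ref{fixed}, and an orbit count relating $T^+(-n)$ to $A^+(-n)$ and $A^-(-n)$. The only cosmetic differences are that you deduce the converse of Part~1 directly from Part~2 and that you count $G_y$-orbits on the subset $S\subseteq M_{-n}$ rather than on the signature sets $A^{\pm}(-n)$, which if anything streamlines the paper's argument.
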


\begin{proof}
\begin{itemize}
\item[1.] Let $n=3$. Since $(1,2,2)\in A^+(-3)$, $A^+(-3) \neq \varnothing$. Let $(a,b,c)\in A^+(-3)$. As $0<a\leq 3/2$ (Lemma \ref{a,b,c}), $a=1$. Since $c|(1^2+3)$ and $a<c$, $c=2$ or $4$. But $c\leq (3+1)/2$ (Lemma \ref{a,b,c}). So, $c=2$. Similarly, $b=2$. Thus, $A^+(-3)=\{(1,2,2)\}$. Conversely, assume that $A^+(-n)=\{(a,b,c)\}$. Let $\al$ be the element of $M_{-n}$ whose signature is $(a,b,c)$. By the proof of Lemma \ref{G_y}, the signature of $y(\al)$ is also in $A^+(-n)$. So, by the assumption on $A^+(-n)$, the signatures of $\al$ and $y(\al)$ are equal. This means that $\al$ is fixed by $y$. It, thus, follows from Lemma \ref{fixed} that $n=3$.

\item[2.]
Let $n\neq 3$. If $A^+(-n)=\varnothing$, then $|A^+(-n)|=0$ and we are done. Suppose that $A^+(-n)\neq \varnothing$. Let $(a,b,c)\in A^+(-n)$ and $\al$ the element of $M_{-n}$ whose signature is $(a,b,c)$. By Lemma \ref{G_y}, $G_y$ acts on $A^+(-n)$. Since the set $A^+(-n)$ is finite (by Lemma \ref{a,b,c}), the number of orbits in $A^+(-n)$ under the action of $G_y$ is finite as well. Since the totally positive triple $\{\al, y(\al), y^2(\al)\}$ in $M_{-n}$ is invariant under the action of $G_y$, so is the corresponding triple $\{(a,b,c), (b-a, -2a+b+c, b), (c-a, c, -2a+b+c)\}$ in $A^+(-n)$ under the action of $G_y$. Since $n\neq 3$, the elements of the triple $\{\al, y(\al), y^2(\al)\}$ are distinct and, thus, so are the elements of the corresponding triple $\{(a,b,c), (b-a, -2a+b+c, b), (c-a, c, -2a+b+c)\}$. This means that each orbit in $A^+(-n)$ consists precisely of three elements and, hence, $|A^+(-n)|$ is divisible by 3 as claimed.
\item[3.] Let $n\neq 3$. It is clear that the two sets $A^+(-n)$ and $A^-(-n)$ are disjoint and that there is a bijection between them. It can also be easily seen that the same arguments in parts 1 and 2 above apply also to $A^-(n)$. Let $\mathcal{O}^{G_y}(A^+(-n))$ and $\mathcal{O}^{G_y}(A^-(-n))$ be the sets of orbits in $A^+(-n)$ and $A^-(-n)$, respectively, under the action of $G_y$. It follows from the argument in the proof of Lemma \ref{G_y} and part 2 above that there is the bijection between $T^+(-n)$ and the disjoint union $\mathcal{O}^{G_y}(A^+(-n))\cup \mathcal{O}^{G_y}(A^-(-n))$ given by
$$\{\al, y(\al), y^2(\al)\} \mapsto \{(a,b,c), (b-a, -2a+b+c, b), (c-a, c, -2a+b+c)\}.$$
Since $|\mathcal{O}^{G_y}(A^+(-n))|=(1/3)\,|A^+(-n)|=(1/3)\,|A^-(-n)|=|\mathcal{O}^{G_y}(A^-(-n))|$ and the two sets of orbits are disjoint, $|T^+(-n)|=(2/3)\,|A^+(-n)|$.
\end{itemize}
\end{proof}

\begin{remark}\label{remark 4} By making use of a C$^{++}$ code that computes $A^+(-n)$ for all $1\leq n \leq 100$ with $n$ square-free, we display in Table 2 (see the Appendix) the values $d(n),|T^+(-n)|$, and $|\mathcal{O}^G(M_{-n})|$ for all such $n$.
\end{remark}

\begin{lem}\label{divisors}
For each $n\in \mathbb{N}$, the cardinality of the set $M^0_{-n}:=\{\al\in M_{-n}\,|\, \|\al\|=0\}$ is $2d(n)$.
\end{lem}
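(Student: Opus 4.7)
The plan is to unpack the definitions directly. An element $\alpha=\cfrac{a+\sqrt{-n}}{c} \in M_{-n}$ lies in $M^0_{-n}$ precisely when $\|\alpha\|=|a|=0$, i.e.\ when $a_\alpha=0$. Under that condition, the defining requirement $b_\alpha=(a^2+n)/c \in \Z$ reduces to $b_\alpha = n/c \in \Z$, which is exactly the statement that $c$ is a nonzero integer divisor of $n$.

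Thus I would set up a map
\[
\varphi\colon \{\,c\in \Z\setminus\{0\}\;:\; c\mid n\,\} \longrightarrow M^0_{-n}, \qquad c\mapsto \frac{\sqrt{-n}}{c}.
\]
The work is to verify that $\varphi$ is well-defined (already done above), surjective (any element of $M^0_{-n}$ has $a=0$ and therefore equals $\sqrt{-n}/c$ for some such $c$), and injective (if $\sqrt{-n}/c_1=\sqrt{-n}/c_2$, then comparing imaginary parts gives $c_1=c_2$). None of these steps involves any real difficulty.

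Finally, the cardinality of the domain is $2d(n)$, since the positive divisors of $n$ number $d(n)$ and each such divisor $d$ contributes both $d$ and $-d$ as integer divisors of $n$. Combining, $|M^0_{-n}|=2d(n)$.

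Since every step is essentially a direct translation of the definitions, there is no real obstacle here; the only thing to be careful about is not forgetting the negative divisors, which is precisely the source of the factor of $2$ in the final count.
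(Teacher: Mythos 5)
Your proof is correct and follows essentially the same route as the paper: both identify $M^0_{-n}$ with the set of nonzero integer divisors of $n$ via $c\mapsto \sqrt{-n}/c$ and count $d(n)$ positive plus $d(n)$ negative divisors. You merely spell out the bijection's well-definedness, injectivity, and surjectivity, which the paper leaves implicit.
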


\begin{proof}
For an element $\al$ of $M_{-n}^0$, $b_\al=n/c_\al$. For $b_\al$ to be an integer, $c_\al$ must be a divisor of $n$. So,
$M_{-n}^0=\{\sqrt{-n}/c\,|\, c \; \mbox{divides}\; n\}$, which has cardinality $2d(n)$ (considering positive and negative divisors of $n$).
\end{proof}

\begin{lem}\label{norm}
For $n\neq 1$, every orbit in $M_{-n}$ must contain either a unique pair of elements of norm zero or a unique totally positive triple; while for  $n=1$, every orbit in $M_{-1}$ must contain a unique element of norm zero.
\end{lem}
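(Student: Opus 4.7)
The plan is to define, for each orbit $\alpha^G\subseteq M_{-n}$, a deterministic \emph{descent} algorithm: starting at an arbitrary $\beta\in\alpha^G$, it terminates at either a totally positive triple lying in $\alpha^G$ or an $x$-pair of norm-zero elements of $\alpha^G$. Termination will provide existence; showing that the terminal object depends only on the orbit (not on the starting $\beta$) will yield uniqueness of the object and preclude the simultaneous presence of both alternatives in one orbit.

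Descent: given $\beta\in\alpha^G$, first, if $\beta$ is totally negative, replace it by $x(\beta)$, which is totally positive of the same norm by Lemma \ref{x}. If the current $\beta$ is norm-zero, output the pair $\{\beta,x(\beta)\}$ and stop. Otherwise inspect the triangle $\{\beta,y(\beta),y^2(\beta)\}$: by Remark \ref{remark 3} it is either T+ (output this triple and stop), contains a norm-zero vertex $v$ (output $\{v,x(v)\}$ and stop), or is mixed with a unique totally negative vertex $v$ (replace $\beta$ by $v$ and iterate). Lemma \ref{positive}(2) guarantees $\|v\|<\|\beta\|$ in the iterative case, so the non-negative integer $\|\beta\|$ strictly decreases each loop and the algorithm terminates in finitely many steps.

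The crux of the proof is verifying that the output $\mathrm{Term}(\beta)$ is unchanged when $\beta$ is replaced by $y(\beta)$, $y^2(\beta)$, or $x(\beta)$. Since $\beta$ and $y(\beta)$ inhabit the same triangle and the algorithm's behavior on a triangle depends only on the triangle, the trajectories from $\beta$ and $y(\beta)$ either inspect the same triangle first (when both are totally positive), or converge after at most one preparatory $x$-flip (when one is totally negative), or immediately output the same norm-zero pair (when one is norm-zero). The same is true for $x$: starting from $x(\beta)$, the algorithm first replaces the working element by $x(x(\beta))=\beta$ when $\beta$ is totally positive, after which the two trajectories coincide; when $\beta$ is norm-zero so is $x(\beta)$ by Lemma \ref{x}(3), and both starts yield the pair $\{\beta,x(\beta)\}$. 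Hence $\mathrm{Term}$ is $G$-invariant and constant on $\alpha^G$.

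Constancy of $\mathrm{Term}$ forces any totally positive triple or norm-zero $x$-pair contained in $\alpha^G$ to equal $\mathrm{Term}(\alpha^G)$, because starting the algorithm at an element of such an object returns it immediately; this yields both uniqueness of the object and the impossibility of two different types coexisting in one orbit. The degenerate cases $n=1$ and $n=3$ are absorbed via Lemma \ref{fixed}: for $n=1$ the norm-zero elements $\pm i$ are fixed by $x$, so the \emph{pair} collapses to a single element; for $n=3$ the elements $(1\pm\sqrt{-3})/(\pm 2)$ are fixed by $y$, so the T+ \emph{triple} collapses to a single element. The main obstacle is the case-by-case verification of the $G$-invariance of $\mathrm{Term}$ through all sub-cases (totally positive in a T+, mixed, or zero triangle; totally negative; norm-zero starting point); once that is settled, the remainder of the argument is routine.
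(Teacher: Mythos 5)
Your descent algorithm is, in substance, the paper's own existence argument: pass from a totally negative element to its image under $x$, and use Lemma \ref{positive}(2) to force a strict decrease of the norm each time the inspected triangle is mixed, so the process halts at a totally positive triple or at a norm-zero element $v$ paired with $x(v)$. Where you genuinely improve on the paper is uniqueness: the paper argues somewhat informally that repeating the process from a totally positive triple never reaches another one, whereas your observation that $\mathrm{Term}(\beta)$ is unchanged under replacing $\beta$ by $x(\beta)$, $y(\beta)$, or $y^2(\beta)$ --- hence constant on the orbit, $G$ being generated by $x$ and $y$ --- yields uniqueness and the mutual exclusivity of the two alternatives in one stroke, since any totally positive triple or norm-zero pair lying in the orbit is returned verbatim by the algorithm started at one of its own elements. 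The deferred case-by-case verification of this invariance is routine and does go through, using Remark \ref{remark 3}, Lemma \ref{x}, and Lemma \ref{positive}.

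There is, however, one concrete gap: the case $n=1$. The statement asserts that every orbit in $M_{-1}$ contains an element of norm zero, so you must rule out the possibility that your algorithm terminates at a totally positive triple for some orbit of $M_{-1}$. Observing that the pair $\{v,x(v)\}$ collapses to a singleton because $\pm i$ are fixed by $x$ handles the ``unique element'' clause, but not this. You need the paper's short computation that $T^+(-1)=\varnothing$: if $0<a<b$, $a<c$, and $bc=a^2+1$, then $ac<a^2+1$, i.e.\ $a(c-a)<1$, impossible for integers $a\geq 1$ and $c-a\geq 1$. With that line added, your proof is complete. (Your remark about $n=3$ is harmless: there the set $\{\al,y(\al),y^2(\al)\}$ degenerates to a singleton, but it is still the unique totally positive triple of its orbit, which is all the lemma claims.)
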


\begin{proof}
We deal with the uniqueness claims at the end of the proof. In an arbitrary orbit in $M_{-n}$, let $\al_1$ be a totally negative element (by Lemma \ref{negative}). By Lemma \ref{x}, $x(\al_1)$ is totally positive. If $yx(\al_1)$ and $y^2x(\al_1)$ are both totally positive, then we have reached at the totally positive triple $(x(\al_1), yx(\al_1), y^2x(\al_1))$, and we stop. Otherwise, one (and only one, by Lemma \ref{positive}) of $yx(\al_1)$ and $y^2x(\al_1)$ is totally negative. We set such a totally negative element as $\al_2$. We claim that $\|\al_2\|<\|\al_1\|$. If $\al_2=yx(\al_1)$, then (as $y^2(\al_2)=x(\al_1)$), it follows from Lemma \ref{positive} that $$\|\al_2\|< \|y^2(\al_2)\| = \|x(\al_1)\|=\|\al_1\|.$$
If, on the other hand, $\al_2=y^2x(\al_1)$, then (as $y(\al_2)=x(\al_1)$, it follows from Lemma \ref{positive} again that
$$\|\al_2\|< \|y(\al_2)\| = \|x(\al_1)\|=\|\al_1\|.$$ Repeating this process starting at $\al_2$ this time and proceeding in this manner, we either reach a totally positive triple at some point or, else, we keep obtaining totally negative elements $\al_1, \al_2, \al_3, \dots$ in the same orbit with $$\|\al_1\|>\|\al_2\| > \|\al_3\|> \dots$$ As the sequence $\|\al_1\|, \|\al_2\| , \|\al_3\|, \dots$ is a decreasing sequence of positive integers, the sequence must terminate. That is, if we never reach a totally positive triple, then there must exist a list of elements $\al_1, \al_2, \dots, \al_m, \al_{m+1}$ in the orbit, with $m\geq 1$, such that $\al_1, \al_2, \dots, \al_m$ are totally negative and $\al_{m+1}$ has norm zero. Now, by Lemma \ref{x}, $x(\al_{m+1})$ is also of norm zero.

What we have shown so far is that in any given orbit in $M_{-n}$, there has to be either a totally positive triple or a pair of elements of norm zero. However, their is something to clarify in the case $n=1$. First, note in this case that the element $\al_{m+1}$ of norm zero must either be $i$ or $i/(-1)$ as these are the only elements of norm zero in $M_{-1}$ (Lemma \ref{divisors}). Moreover, $i$ and $i/(-1)$ are fixed by $x$ (Lemma \ref{fixed}) and, thus, $\al_{m+1}=x(\al_{m+1})$. As $i$ and $i/(-1)$ are in distinct orbits (Lemma \ref{sign}), the element $\al_{m+1}$ of norm zero we have reached at is unique in this case. Secondly, we show that no orbit in $M_{-1}$ contains a totally positive triple, i.e. $T^+(-1)$ is empty. Suppose, on the contrary that $\al=\cfrac{a+\sqrt{-1}}{c}\in T^+(-1)$ with $0<a$, $a<b$, and $a<c$ (the other case is handled similarly). As $a<b$ and $b=(a^2+1)/c$, $ac<a^2+1$. So $a(c-a)<1$, a contradiction, because $a\geq 1$ and $c-a\geq 1$. Thus, $T^+(-1)$ is empty.

As for the uniqueness of the totally positive triple in an orbit in case $n\neq 1$ (if the orbit contains one), suppose that $\{\al, y(\al), y^2(\al)\}$ is such a triple. Then, the only way we can get out of the triple is by the action of $x$, which sends each of these three elements to a totally negative element (Lemma \ref{x}). Without loss of generality, consider the totally negative element $x(\al)$. By Lemma \ref{positive}, $yx(\al)$ and $y^2x(\al)$ are both totally positive. Again the only way to get out of the triple $\{x(\al), yx(\al), y^2x(\al)\}$ is by the action of $x$. But $xx(\al)=\al$ takes us back to $\al$ and hence back to the given totally positive triple. On the other hand, $xyx(\al)$ is totally negative and, by Lemma \ref{positive}, $yxyx(\al)$ and $y^2xyx(\al)$ are both totally positive. Similarly, $xy^2x(\al)$ is totally negative and, by Lemma \ref{positive}, $yxy^2x(\al)$ and $y^2xy^2x(\al)$ are both totally positive. If we keep repeating this process, we keep reaching endlessly at triples, one of whose entries is totally negative and the other two entries are totally positive. Since the action of $G$ on the orbit is transitive, it is certain that we will never reach at any other totally positive triple other than $\{\al, y(\al), y^2(\al)\}$.
In a similar manner, we can show that if the orbit contains an element $\al$ of norm zero, then (using Lemma \ref{positive}) $\al$ and $x(\al)$ are the only elements of norm zero in the orbit.
\end{proof}

\subsection{{\bf Proofs of Theorems \ref{orbits 1} and \ref{orbits 2}}}

\begin{proof} ({\bf Theorem \ref{orbits 1}})\hfill

For $n=1$, it follows from Lemma \ref{norm} and its proof that an orbit in $M_{-1}$ must contain either $i$ or $i/(-1)$ and not both. Thus, $M_{-1}$ contains precisely two orbits. As for $n\neq 1$, Lemma \ref{norm} shows that an arbitrary orbit contains uniquely either a pair of elements of norm zero or a totally positive triple and not both. By this and Lemma \ref{divisors}, we have as claimed:
$$|\mathcal{O}^G(M_{-n})|=\frac{1}{2}|M_{-n}^0|+|T^+(-n)|=d(n) + |T^+(-n)|.$$

For $n=3$, it follows from the argument in the proof of Lemma \ref{triples} (part 1) that $$T^+(-3)=\{\cfrac{1+\sqrt{-3}}{2}, \cfrac{-1+\sqrt{-3}}{-2}\}.$$ Thus, $|\mathcal{O}^G(M_{-3})|=d(3) + |T^+(-3)|=2+2=4$.

Finally, for $n\neq 1$ or $3$, it follows from Lemma \ref{triples} (part 3) that $$|\mathcal{O}^G(M_{-n})|=d(n) + |T^+(-n)|=d(n)+(2/3)|A^+(-n)|.$$

We now prove that $|\mathcal{O}^G(M_{-n})|\equiv 0 \;(\mbox{mod}\,4)$ for $n\neq 1$ or $2$. Note that we excluded the case $n=1$ since $|\mathcal{O}^G(M_{-1})|=2$ from above, and we exclude the case $n=2$ because $T^+(-2)=\varnothing$ and, thus, $|\mathcal{O}^G(M_{-2})|=d(2)+0=2$ (if $\cfrac{a+\sqrt{-2}}{c}\in T^+(-2)$, then as $a\leq 2/2$, a=1; but then $c\leq 3/2$ and, so, $c=1$; we reject this because $a<c$; hence, $T^+(-2)=\varnothing$). Since $|\mathcal{O}^G(M_{-3})|=4$, $|\mathcal{O}^G(M_{-3})|\equiv 0 \;(\mbox{mod}\,4)$. Now, let $n>3$. By the paragraph above, we have $|\mathcal{O}^G(M_{-n})|=d(n) +(2/3)|A^+(-n)|$. It thus follows that $$|\mathcal{O}^G(M_{-n})|\equiv d(n) + 2\,|A^+(-n)|\;(\mbox{mod}\,4).$$ We write the set $A^+(-n)$ as the disjoint union of subsets in the form $$A^+(-n)=A_{b\neq c}^+(-n)\cup A_{b=c}^+(-n),$$ where
$$\mbox{$A_{b\neq c}^+(-n):=\{(a,b,c)\in A^+(-n)\;|\; b\neq c\}$ and $A_{b=c}^+(-n):=\{(a,b,c)\in A^+(-n)\;|\; b=c\}$.}$$
By Lemma \ref{a,b,c}, the two sets $A_{b\neq c}^+(-n)$, and $A_{b=c}^+(-n)$ are finite. As a general observation, we can see that $(a,b,c)\in A^+(-n)$ if and only if $(a,c,b)\in A^+(-n)$, which implies that elements in the set $A_{b\neq c}^+(-n)$ occur in pairs. Thus, $|A_{b\neq c}^+(-n)|$ is always even.

For the rest of the proof, we deal with three cases separately: when $n$ is an even composite integer, when $n$ is an odd prime, and when $n$ is an odd composite integer.
\begin{itemize}
\item[Case 1:] Let $n$ be an even composite integer with $n=2m$ for some $m>1$ with $m$ odd (as $n$ is square-free). Since $d(n)=d(2)d(m)=2d(m)$ and $2|d(m)$, $d(n)\equiv 0\; (\mbox{mod}\,4)$. So, $|\mathcal{O}^G(M_{-n})|\equiv 2|A^+(-n)|\;(\mbox{mod}\,4)$. Since $|A^+(-n)|=|A_{b\neq c}^+(-n)|+|A_{b=c}^+(-n)|$ and $|A_{b\neq c}^+(-n)|$ is even, $|\mathcal{O}^G(M_{-n})|\equiv 2|A_{b=c}^+(-n)| \;(\mbox{mod}\,4)$ in this case. Let $(a,b,b)\in A_{b=c}^+(-n)$. Then $b^2=a^2+n$, which implies that $(b+a)(b-a)=n=2m$. If $2|(b+a)$, then $b-a=m/k$, where $b+a=2k$ and $k$ is odd (as $m$ is odd). Thus, $2b=2k+m/k$ is odd, which is impossible. A similar contradiction occurs if $2|(b-a)$. We thus conclude that $A_{b=c}^+(-n) =\varnothing$ in this case and, hence, $|\mathcal{O}^G(M_{-n})|\equiv 0 \;(\mbox{mod}\,4)$.

\item[Case 2:] Let $n$ be an odd prime. So, $d(n)=2 \equiv 2 \;(\mbox{mod}\,4)$. Then, $|\mathcal{O}^G(M_{-n})|\equiv 2 + 2\,|A^+(-n)|\;(\mbox{mod}\,4)$ and, therefore, it suffices to show that $|A^+(-n)|$ is odd in this case. Since $|A^+(-n)|=|A_{b\neq c}^+(-n)|+|A_{b=c}^+(-n)|$ and $|A_{b\neq c}^+(-n)|$ is even, we show that $|A_{b=c}^+(-n)|$ is odd. We, in fact, show that $|A_{b=c}^+(-n)|=1$. For $(a,b,b)\in A_{b=c}^+(-n)$, $b^2=a^2+n$ and, thus, $(b+a)(b-a)=n$. Since $b+a>b-a$ and $n$ is prime, we must have $b+a=n$ and $b-a=1$. Thus, $b=(n+1)/2$ and $a=(n-1)/2$. That is, $((n-1)/2, (n+1)/2, (n+1)/2)$ is the only element in $A_{b=c}^+(-n)$. Hence, the claimed congruence is settled in this case too.
\item[Case 3:] Let $n$ be an odd composite integer with $n=p_1 p_2 \dots p_r$, $r\geq 2$, where the $p_i$ are distinct primes (as $n$ is square-free). Then $d(n)=d(p_1)d(p_2)\dots d(p_r)=2^r \equiv 0 \;(\mbox{mod}\,4)$.  So, $|\mathcal{O}^G(M_{-n})|\equiv 2\,|A^+(-n)|\;(\mbox{mod}\,4)$ and, therefore, it suffices to show that $|A^+(-n)|$ is even in this case. Since $|A^+(-n)|=|A_{b\neq c}^+(-n)|+|A_{b=c}^+(-n)|$ and $|A_{b\neq c}^+(-n)|$ is even,, we show that $|A_{b=c}^+(-n)|$ is even as well. In fact, we prove the following stronger claim:
    $$|A_{b=c}^+(-n)|= \left \{ \begin{array} {l@{\quad;\quad}l} C^0_r + C^1_r +\dots + C^{\frac{r}{2}-1}_r+\frac{1}{2}C^{\frac{r}{2}}_r & \mbox{if $r$ is even} \\
C^0_r + C^1_r + \dots + C^{\frac{r-1}{2}-1}_r + C^{\frac{r-1}{2}}_r & \mbox{if $r$ is odd}.
\end{array} \right. $$
    For $(a,b,b)\in A_{b=c}^+(-n)$, $b^2=a^2+n$ and, thus, $(b+a)(b-a)=n=p_1p_2\dots p_r$. We notice that $b+a>b-a$ and investigate all the possible ways of factoring $b+a$ and $b-a$. Suppose that $r$ is even. Then, there is $C^0_r$ possibility that $b+a$ is the product of $r$ primes (i.e. $a+b=n$) and $b-a$ is the product of no primes (i.e. $b-a=1$), and there is $C^1_r$ possibilities that $b+a$ is the product of $r-1$ primes and $b-a$ is the product of one prime. We continue in this manner until we get to the final scenario which is having $\frac{1}{2}C^{\frac{r}{2}}_r$ possibilities of writing both of $b+a$ and $b-a$ as a product of $r/2$ primes each. Seeing obviously that each single possibility among the above ways of factorizations of $b+a$ and $b-a$ corresponds uniquely to a single point of $A_{b=c}^+(-n)$, the conclusion of the claim when $r$ is even follows immediately. The case when $r$ is odd is handled similarly. From elementary combinatorics (see \cite{Ros} for instance), we know that $\sum_{k=0}^r C^k_r=2^r$ and $C^k_r=C^{r-k}_r$ for $k=0, \dots, r$. So, if $r$ is even, then $C^0_r + C^1_r +\dots + C^{\frac{r}{2}-1}_r+\frac{1}{2}C^{\frac{r}{2}}_r=\frac{1}{2}C^{\frac{r}{2}}_r+C^{\frac{r}{2}+1}_r +\dots + C^r_r$. Thus, $2^r=\sum_{k=0}^r C^k_r = 2\left(C^0_r + C^1_r +\dots + C^{\frac{r}{2}-1}_r+\frac{1}{2}C^{\frac{r}{2}}_r\right)=2|A_{b=c}^+(-n)|$. Hence, $|A_{b=c}^+(-n)|=2^{r-1}$ which is even as desired. The same conclusion is reached similarly if $r$ is odd. This concludes the proof.
\end{itemize}
\end{proof}
\newpage
\begin{proof} ({\bf Theorem \ref{orbits 2}})\hfill

By Theorem \ref{orbits 1}, $|\mathcal{O}^G(M_{-n})|=d(n)+2/3 |A^+(-n)|$. So the desired claim of the current theorem holds if and only if $$|A^+(-n)|=\sum_{i=1}^{\lfloor n/2 \rfloor}[ d(i^2+n)-2d_{_{\leq i}} (i^2+n)].$$ We seek now to prove this last equality. Making use of Lemma \ref{a,b,c}, we first write the set $A^+(-n)$ as a disjoint union of subsets in the form $$A^+(-n)=A_1^+(-n)\cup A_2^+(-n) \cup \dots \cup A_{\lfloor n/2 \rfloor}^+(-n),$$ where, for each $i=1,2, \dots, \lfloor n/2 \rfloor$, $$A_i^+(-n):=\{(i,b,c)\in \N^3\;|\; i<b, i<c, \;\mbox{and}\; b=(i^2+n)/c\}.$$ For a fixed such $i$, we can see that $A_i^+(-n)=A_{i,d_1}^+(-n)-\left\{A_{i, d_2}^+(-n)\cup A_{i,d_3}^+(-n)\right\}$, where
\begin{align*}
A_{i,d_1}^+(-n)&:= \{(i,d_1, (i^2+n)/d_1)\in A_i^+(-n)\;|\; d_1\in \N \; \mbox{and}\; d_1 |(i^2+n)\},\\
A_{i,d_2}^+(-n)&:= \{ (i,d_2, (i^2+n)/d_2)\in A_i^+(-n)\;|\; d_2\in \N, d_2\leq i, \;\mbox{and}\; d_2|(i^2+n)\},\\
A_{i,d_3}^+(-n)&:= \{(i, (i^2+n)/d_3, d_3) \in A_i^+(-n) \;|\: d_3\in \N, d_3\leq i, \; \mbox{and}\; d_3|(i^2+n)\}.
\end{align*}
Note that $|A_{i,d_1}^+(-n)|=d(i^2+n)$ and $|A_{i,d_2}^+(-n)|=|A_{i,d_3}^+(-n)|=d_{_{\leq i}}(i^2+n)$. If the latter two sets have a point in common, then for some $d_2\leq i$ and $d_3\leq i$ we would have $d_2d_3=i^2+n \leq i^2$, which is absurd. So, these two sets are disjoint and, hence, $$|A_i^+(-n)|=|A_{i,d_1}^+(-n)|-|A_{i,d_2}^+(-n)|-|A_{i,d_3}^+(-n)|=d(i^2+n)-2d_{_{\leq i}}(i^2+n).$$
As $|A^+(-n)|=|A_1^+(-n)|+ |A_2^+(-n)| + \dots + |A_{\lfloor n/2 \rfloor}^+(-n)|$, the desired equality follows.
\end{proof}

\begin{cor}
The action of $G$ on $M_{-n}$ is intransitive for any square-free $n\in \N$.
\end{cor}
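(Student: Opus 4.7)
My plan is to reduce the statement to a fact already established, namely that the sign of the denominator is an orbit invariant. By Lemma \ref{sign}, if two elements of $M_{-n}$ lie in the same $G$-orbit, then their denominators have the same sign. Hence, to prove intransitivity, it suffices to exhibit two elements of $M_{-n}$ whose denominators have opposite sign.

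The natural candidates are $\alpha = \sqrt{-n}/1$ and $\beta = \sqrt{-n}/(-1)$. Both lie in $M_{-n}$: in each case $a_\alpha = a_\beta = 0 \in \Z$, the denominators $c_\alpha = 1$ and $c_\beta = -1$ are in $\Z$ and divide $a^2 + n = n$, so $b_\alpha = n$ and $b_\beta = -n$ are both integers. Since $\mbox{sign}(c_\alpha) = +1 \neq -1 = \mbox{sign}(c_\beta)$, Lemma \ref{sign} forces $\alpha$ and $\beta$ to lie in distinct orbits, and the action is intransitive.

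As a sanity check, the same conclusion falls out of Theorem \ref{orbits 1}: for $n = 1$ it asserts $|\mathcal{O}^G(M_{-1})| = 2$, and for $n \geq 2$ it gives $|\mathcal{O}^G(M_{-n})| = d(n) + |T^+(-n)| \geq d(n) \geq 2$, since every $n \geq 2$ has at least the two divisors $1$ and $n$. Because both routes are immediate from machinery already developed, I do not anticipate any real obstacle; the corollary functions essentially as a note recording that our orbit count always exceeds one.
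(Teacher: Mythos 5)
Your proof is correct. The paper itself gives no written proof for this corollary; it is placed immediately after Theorems \ref{orbits 1} and \ref{orbits 2} and is evidently meant to follow from the orbit count, exactly as in your ``sanity check'': $|\mathcal{O}^G(M_{-n})|=2$ for $n=1,2$, equals $4$ for $n=3$, and equals $d(n)+|T^+(-n)|\geq d(n)\geq 2$ otherwise. Your primary argument, however, takes a genuinely different and more elementary route: you invoke only Lemma \ref{sign} (constancy of $\mbox{sign}(c_\beta)$ on orbits) and exhibit the two norm-zero elements $\sqrt{-n}/1$ and $\sqrt{-n}/(-1)$, whose membership in $M_{-n}$ you verify correctly ($a=0$, $c=\pm 1$, $b=\pm n\in\Z$). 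This buys independence from the entire counting apparatus --- Lemmas \ref{positive} through \ref{norm} and both main theorems --- so the corollary could in fact be stated and proved right after Lemma \ref{sign} in Section 3.1. What the theorem-based route buys instead is quantitative information (the exact number of orbits, and the lower bound $d(n)$), of which intransitivity is only the crudest consequence. Both arguments are sound; either suffices.
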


\begin{example}\label{example 2}
As an illustration, we compute in this example the value $|\mathcal{O}^G(M_{-n})|$ for $n=11$ in such a way that verifies both Theorem \ref{orbits 1} and Theorem \ref{orbits 2} in this case.

By Theorem \ref{orbits 1} and its proof, $|\mathcal{O}^G(M_{-11})|=d(11)+|T^+(-11)|=d(11)+(2/3)\,|A^+(-11)|$.
Of course, $d(11)=2$. So, it remains to find $|A^+(-11)|$. By Lemma \ref{a,b,c}, for $(a,b,c)\in A^+(-11)$, $a\leq 5$ and $c\leq 6$. We try these values one by one. For $a=1$, $(1^2+11)/c\in \N$ if and only if $c|12$. So, by Lemma \ref{a,b,c} again, the possible candidate values of $c$ are $1, 2, 3, 4$, and $6$. Since $a<c$, we discard the value $c=1$. For $c=2$, we have $b=6$ and we get that $(1,2,6)\in A^+(-11)$. For $c=3$, we have $b=4$ and we get that $(1, 3, 4)\in A^+(-11)$. For $c=4$, we have $b=3$ and we get that $(1, 4,3)\in A^+(-11)$. For $c=6$, we have $b=2$ and we get that $(1, 6, 2)\in A^+(-11)$. For $a=2$, $(2^2+11)/c\in \N$ if and only if $c|15$. By an argument similar to the above, we get in this case only two elements $(2, 3,5), (2,5,3)\in A^+(-11)$. For $a=3$, $(3^2+11)/c\in \N$ if and only if $c|20$. We also get in this case only two elements $(3,4,5), (3,5,4)\in A^+(-11)$. For $a=4$, $(4^2+11)/c\in \N$ if and only if $c|27$. The values $c=1$ and $3$ are discarded as $a<c$. Thus, for $a=4$ we get no element in $A^+(-11)$. For $a=5$, it can be checked similarly that we only get only the element $(5,6,6)\in A^+(-11)$. In summary, we have $|A^+(-11)|=9$ and, thus, $|\mathcal{O}^G(M_{-11})|=d(11)+(2/3)(9)=8$.

On the other hand, by Theorem \ref{orbits 2}, we have
\begin{align*}
|\mathcal{O}^G(M_{-11})|&=d(11)+ \cfrac{2}{3}\,\sum_{i=1}^5[ d(i^2+11)-2d_{\leq i} (i^2+11)]\\
& =2+(2/3)\, \{\,[d(12)+d(15)+d(20)+d(27)+d(36)]\\
& \quad -2\,[d_{\leq 1}(12)+d_{\leq 2}(15)+d_{\leq 3}(20)+d_{\leq 4}(27)+d_{\leq 5}(36)]\,\}\\
& =2+(2/3)\left\{[6+4+6+4+9]-2\,[1+1+2+2+4]\right\}\\
& =2+(2/3)(29-20)\\
& =8.
\end{align*}
\end{example}

\section*{Acknowledgement}
The authors would like to express their gratitude to King Khalid University for providing administrative and technical support. The second author would also like to thank the University Council and the Scientific Council of King Khalid University for approving a sabbatical leave request for the academic year 2018-2019, during which this article was prepared and submitted.

\section*{Appendix}
Using a C$^{++}$ code to compute the sets $A^+(-n)$ for all $1\leq n \leq 100$ with $n$ square-free, the following table gives the values of $|T^+(-n)|, d(n)$, and $|\mathcal{O}^G(M_{-n})|$ for all such $n$.

\begin{center}
\begin{tabular}{|c| c c c | c| c c c|c| c c c| }
\hline
 n & $|T^+(-n)|$ & $d(n)$ & $|\mathcal{O}^G(M_{-n})|$ &  n & $|T^+(-n)|$ & $d(n)$ & $|\mathcal{O}^G(M_{-n})|$ &  n & $|T^+(-n)|$ & $d(n)$ & $|\mathcal{O}^G(M_{-n})|$\\
\hline
1 & 0 & 2 & 2 & 33 & 4 & 4 & 8 & 67 & 6 & 2 & 8 \\
2 & 0 & 2 & 2 & 34 & 4 & 4 & 8 & 69 & 12 & 4 & 16\\
3 & 2 & 2 & 4 & 35 & 12 & 4 & 16 & 70 & 0 & 8 & 8\\
5 & 2 & 2 & 4 & 37 & 2 & 2 & 4 & 71 & 26 & 2 & 28\\
6 & 0 & 4 & 4 & 38 & 8 & 4 & 12 & 73 & 6 & 2 & 8\\
7 & 2 & 2 & 4 & 39 & 12 & 4 & 16 & 74 & 16 & 4 & 20\\
10 & 0 & 4 & 4 & 41 & 14 & 2 & 16 & 77 & 12 & 4 & 16   \\
11 & 6 & 2 & 8 & 42 & 0 & 8 & 8 & 78 & 0 & 8 & 8 \\
13 & 2 & 2 & 4 & 43 & 6 & 2 & 8 & 79 & 18 & 2 & 20 \\
14 & 4 & 4 & 8 & 46 & 4 & 4 & 8 & 82 & 4 & 4 & 8  \\
15 & 4 & 4 & 8 & 47 & 18 & 2 & 20 & 83 & 22 & 2 & 24 \\
17 & 6 & 2 & 8 & 51 & 12 & 4 & 16 & 85 & 4 & 4 & 8 \\
19 & 6 & 2 & 8 & 53 & 10 & 2 & 12 & 86 & 16 & 4 & 20 \\
21 & 4 & 4 & 8 & 55 & 12 & 4 & 16 & 87 & 20 & 4 & 24 \\
22 & 0 & 4 & 4 & 57 & 4 & 4 & 8 & 89 & 22 & 2 & 24 \\
23 & 10 & 2 & 12 & 59 & 22 & 2 & 24 & 91 & 12 & 4 & 16 \\
26 & 8 & 4 & 12 & 61 & 10 & 2 & 12 & 93 & 4 & 4 & 8 \\
29 & 10 & 2 & 12 & 62 & 12 & 4 & 16 & 94 & 12 & 4 & 16 \\
30 & 0 & 8 & 8 & 65 & 12 & 4 & 16 & 95 & 28 & 4 & 32 \\
31 & 10 & 2 & 12 & 66 & 8 & 8 & 16 & 97 & 6 & 2 & 8 \\

\hline
\end{tabular}
\end{center}
\begin{center} \tablename{ 2}: The number of orbits in $M_{-n}$ for square-free $1\leq n \leq 100$ \end{center}$\\$

Below is the C$^{++}$ code used to compute the sets $A^+(-n)$ for $1\leq n\leq 100$. \\

\fontsize{10}{10}\selectfont
{\tt
$\#$include<iostream> using namespace std;

\indent\indent int main $()\{$

\indent\indent\indent int $n, a, b,c,$count $=0$,check $=0$;

\indent\indent\indent for $(n=1; n<101; n^{++})\{$

\indent\indent\indent\indent if $((n\% 4!=0)\&\&(n\%9!=0)\&\&(n\%25!=0)\&\&(n\%49!=0))\{$

\indent\indent\indent\indent\indent for $(a=1; a<100; a^{++})\{$

\indent\indent\indent\indent\indent\indent for $(b=2; b<100; b^{++})\{$

\indent\indent\indent\indent\indent\indent\indent for $(c=2; c<100; c^{++})\{$

\indent\indent\indent\indent\indent\indent\indent\indent if $((b>a) \&\& (c>a))\{$

\indent\indent\indent\indent\indent\indent\indent\indent\indent if $((b^*c-a^*a)==n)\{$

\indent\indent\indent\indent\indent\indent\indent\indent\indent\indent cout$<<$"when $n=$"$<<n<<$"$,a=$"$<<a<<$"$,b=$"

\indent\indent\indent\indent\indent\indent\indent\indent\indent\indent\indent $<<b<<$"$,c=$"$<<c<<$endl;

\indent\indent\indent\indent\indent\indent\indent\indent\indent\indent count++;

\indent\indent\indent\indent\indent\indent\indent\indent\indent\indent check$=1$;

\indent\indent\indent\indent\indent\indent\indent\indent\indent $\}$

\indent\indent\indent\indent\indent\indent\indent\indent $\}$

\indent\indent\indent\indent\indent\indent\indent $\}$

\indent\indent\indent\indent\indent\indent $\}$

\indent\indent\indent\indent\indent $\}$

\indent\indent\indent\indent\indent if (check$==1$)$\{$

\indent\indent\indent\indent\indent\indent cout$<<$"Possibilities for"$<<n<<$":"$<<$count$<<$endl$<<$endl;

\indent\indent\indent\indent\indent\indent count $=0$;

\indent\indent\indent\indent\indent\indent check $=0$;

\indent\indent\indent\indent\indent $\}$

\indent\indent\indent\indent $\}$

\indent\indent\indent $\}$

\indent\indent\indent return $0$;

\indent\indent $\}$

}

\end{document}